\numberwithin{equation}{section}
\newtheoremstyle{personal}%
{12pt}%      Space above
{12pt}%      Space below
{\slshape}%         Body font
{}%         Indent amount
{\bfseries}% Theorem head font
{.}%        Punctuation after theorem head
{.5em}%     Space after theorem head
{}%         Theorem head spec (can be left empty, meaning "normal")
\theoremstyle{personal}%
\newtheorem{thm}{Theorem}[section]
\newtheorem{lem}[thm]{Lemma}
\theoremstyle{definition}
\newtheorem{rem}[thm]{Remark}
\newcommand{\N}{\mathds{N}}
\newcommand{\Z}{\mathds{Z}}
\newcommand{\R}{\mathds{R}}
\newcommand{\T}{\mathds{T}}
\newcommand{\diff}{\mathrm{d}}
\newcommand{\Tan}{\mathrm{T}}
\newcommand{\E}{\mathcal{E}}
\newcommand{\M}{\mathcal{M}}
\newcommand{\U}{\mathcal{U}}
\newcommand{\W}{\mathcal{W}}
\newcommand{\V}{\mathcal{V}}
\newcommand{\X}{\mathcal{X}}
\newcommand{\MM}{\widetilde{\M}}
\newcommand{\PP}{\mathcal{P}}
\newcommand{\crit}{\mathrm{crit}}
\newcommand{\Idiscr}{I_\mathrm{discr}}
\newcommand{\Ifin}{I_\mathrm{finite}}
\begin{document}

\title[Multiplicity of periodic orbits of magnetic flows on $S^2$]{The multiplicity problem for periodic orbits\\ of magnetic flows on the 2-sphere}

\author[Abbondandolo]{Alberto Abbondandolo}
\address{Alberto Abbondandolo\newline\indent 
Ruhr Universit\"at Bochum, Fakult\"at f\"ur Mathematik\newline\indent 
Geb\"aude NA 4/33, D-44801 Bochum, Germany}
\email{alberto.abbondandolo@rub.de}

\author[Asselle]{Luca Asselle}
\address{Luca Asselle\newline\indent 
Ruhr Universit\"at Bochum, Fakult\"at f\"ur Mathematik\newline\indent 
Geb\"aude NA 4/35, D-44801 Bochum, Germany}
\email{luca.asselle@ruhr-uni-bochum.de}

\author[Benedetti]{Gabriele Benedetti}
\address{Gabriele Benedetti\newline\indent 
Universit\"at Leipzig, Mathematisches Institut\newline\indent Augustusplatz 10, D-04109 Leipzig, Germany}
\email{gabriele.benedetti@math.uni-leipzig.de}

\author[Mazzucchelli]{Marco Mazzucchelli}
\address{Marco Mazzucchelli\newline\indent 
CNRS, \'Ecole Normale Sup\'erieure de Lyon, UMPA\newline\indent  
46 all\'ee d'Italie, 69364 Lyon Cedex 07, France}
\email{marco.mazzucchelli@ens-lyon.fr}

\author[Taimanov]{Iskander A. Taimanov}
\address{Iskander A. Taimanov\newline\indent 
Sobolev Institute of Mathematics\newline\indent 
avenue academician Koptyug 4, 6300090 Novosibirsk, Russia\newline\indent
and Novosibirsk State University\newline\indent  
Pirogov street 2, 630090 Novosibirsk, Russia}
\email{taimanov@math.nsc.ru}

\date{August 10, 2016}
\subjclass[2000]{37J45, 58E05}
\keywords{Tonelli Lagrangians, magnetic flows, Hamiltonian systems, periodic orbits, Ma\~n\'e critical values}

\dedicatory{To the memory of Abbas Bahri (1955--2016)}

\begin{abstract}
We consider magnetic Tonelli Hamiltonian systems on the cotangent bundle of the 2-sphere, where the magnetic form is not necessarily exact. It is known that, on very low and on high energy levels, these systems may have only finitely many periodic orbits. Our main result asserts that almost all energy levels in a precisely characterized intermediate range $(e_0,e_1)$ possess infinitely many periodic orbits. Such a range of energies is non-empty, for instance, in the physically relevant case where the Tonelli Lagrangian is a kinetic energy and the magnetic form is oscillating (in which case, $e_0=0$ is the minimal energy of the system).\end{abstract}

\maketitle

\section{Introduction}
\label{s:introduction}

This paper is the last chapter of a work started in \cite{Abbondandolo:2015lt} and further developed in \cite{Abbondandolo:2014rb, Asselle:2015ij, Asselle:2015sp, Asselle:2016qv} devoted to studying the multiplicity of periodic orbits on generic low energy levels in magnetic Tonelli Lagrangian systems on surfaces. Such a study was based on a generalization of Bangert's waist Theorem \cite[Theorem~4]{Bangert:1980ho}, classically formulated for geodesic flows on $S^2$, to the magnetic Tonelli setting. Roughly speaking, a waist is a non-constant periodic geodesic (resp.\ a periodic orbit in the Tonelli case) which minimizes the length (resp.\ the action) among nearby curves. The original waist Theorem says that a Riemannian 2-sphere possesses infinitely many closed geodesics provided it possesses a waist. Such a statement is a crucial ingredient for the proof that, indeed, every Riemannian 2-sphere possesses infinitely many closed geodesics \cite{Bangert:1993wo, Franks:1992jt, Hingston:1993ou}.

Let us introduce the general setting in which we will work. If $M$ is a closed smooth manifold, a Tonelli Lagrangian $L:\Tan M\to\R$ is a smooth function whose restriction to any fiber of $\Tan M$ is superlinear with positive definite Hessian, see e.g.\ \cite{Mather:1991uq, Fathi:2008xl, Abbondandolo:2013is}. A magnetic Tonelli system is a pair $(L,\sigma)$, where $L:\Tan M\to\R$ is a Tonelli Lagrangian and $\sigma$ is a closed 2-form on $M$, which we refer to as the magnetic form. If $\pi:\Tan^* M\to M$ denotes the projection of the cotangent bundle, the pair $(L,\sigma)$ defines a flow on $\Tan M$ that is conjugated through the Legendre transformation $\partial_vL$ to the Hamiltonian flow on $(\Tan^*M,\diff p\wedge \diff q+\pi^*\sigma)$ of the dual Tonelli Hamiltonian $H:\Tan^*M\to\R$, $H(q,p)=\max\{ pv - L(q,v)\ |\ v\in\Tan_qM\}$, see e.g.\ \cite{Arnold:1961lq, Novikov:1982xy, Asselle:2016qv}. 
A particularly relevant special case of this setting is the electromagnetic one, when the Lagrangian $L$ is of the form $L(q,v)=\tfrac12 g_q(v,v)-U(q)$ for some Riemannian metric $g$ and some smooth potential $U:M\to\R$. In this situation, the system $(L,\sigma)$ models the motion of a particle on $M$ with kinetic and potential energies described by $L$ and under the further effect of a Lorentz force described by $\sigma$. When the potential $U$ vanishes, the dynamics of the system $(L,\sigma)$ is a so-called magnetic geodesic flow.

In this paper, we will focus on the case where $M=S^2$. The energy function $E:\Tan M\to\R$, $E(q,v):=\partial_v L(q,v)v-L(q,v)$, is preserved along the motion. Therefore it is natural to study the dynamics of a magnetic Tonelli flow on a prescribed energy hypersurface $E^{-1}(e)$, and very different qualitative behaviors appear for different values of the energy $e$, see \cite{Cieliebak:2010zt} and references therein. For our purposes two energy values will bear special significance: $e_0(L)$ and $e_1(L,\sigma)$. The former is the minimal energy $e$ such that the corresponding energy hypersurface $E^{-1}(e)\subset\Tan S^2$ projects onto the whole $S^2$. We postpone the precise definition of $e_1(L,\sigma)$ to Section \ref{s:action}. For now, we just mention that $e_1(L,\sigma)\geq e_0(L)$, and when $\sigma$ is exact with primitive $\theta$ we have $e_1(L,d\theta)=c_u(L+\theta)$, where $c_u(L+\theta)$ is
the Ma\~ n\'e critical value of the universal cover of $L+\theta$ (see e.g.~\cite{Contreras:1999fm, Abbondandolo:2013is} for the definition of Ma\~ n\'e critical values).

The periodic orbits problem for magnetic geodesics was first studied by Novikov \cite{Novikov:1981ef, Novikov:1982xy} in the early 1980s. The classical least action principle for the periodic orbits with prescribed energy is not directly available in this setting, due to the potential non-exactness of the magnetic 2-form. Novikov showed how to recover the variational principle in the universal cover of the space of periodic curves, and in his celebrated ``throwing out cycles'' method he proposed how to exploit the corresponding deck transformation in order to detect action values of periodic orbits (for the throwing out cycles method, see also \cite{Taimanov:1983uo}). For magnetic geodesics on closed surfaces, waists were first studied by Taimanov in a series of papers \cite{Taimanov:1991el,Taimanov:1992fs,Taimanov:1992sm}. Taimanov's result is that, given a kinetic Lagrangian $L(q,v)=\tfrac12 g_q(v,v)$ and an oscillating magnetic 2-form $\sigma$ on a closed 2-dimensional configuration space, there exists a waist $\alpha_e$ at the energy level $e$, for all $e\in (0,e_1(L,\sigma))$ (see also \cite{Contreras:2004lv} for a different proof). When $\sigma$ is exact, Abbondandolo, Macarini, Mazzucchelli and Paternain \cite{Abbondandolo:2014rb} employed Taimanov's waist $\alpha_e$ on any energy level $e$ belonging to a full measure subset of $(0,e_1(L,\sigma))$ in order to construct a sequence of minmax families giving an infinite number of (geometrically distinct) periodic orbits with energy $e$. Short afterwards, Asselle and Benedetti extended the result to non-exact $\sigma$ on surfaces of genus at least one \cite{Asselle:2015ij, Asselle:2015sp}. The results in \cite{Taimanov:1991el, Taimanov:1992fs, Taimanov:1992sm, Abbondandolo:2014rb, Asselle:2015ij, Asselle:2015sp} have been further extended by Asselle and Mazzucchelli \cite{Asselle:2016qv} to the general magnetic Tonelli setting. In this note we complete the picture by treating the last case remained open for the multiplicity problem: the 2-sphere. Namely, we are going to prove the following result.

\begin{thm}
\label{t:main}
Let $L:\Tan S^2\to\R$ be a Tonelli Lagrangian, and $\sigma$ a 2-form on $S^2$. For almost every $e\in(e_0(L),e_1(L,\sigma))$, the Lagrangian system of $(L,\sigma)$ possesses infinitely many periodic orbits with energy $e$.
\end{thm}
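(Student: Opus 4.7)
The plan is to adapt to $S^2$ the Bangert--Hingston style minmax scheme developed for surfaces of higher genus in \cite{Asselle:2015ij, Asselle:2015sp, Asselle:2016qv}, anchoring it on the Taimanov waist $\alpha_e$ whose existence on each energy level $e\in(e_0,e_1)$ is ensured by \cite{Asselle:2016qv}, and overcoming the non-compactness of the variational problem via Struwe's monotonicity argument.

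Since $\sigma$ is in general not exact, the free-period action functional is multivalued on $\M:=\Lambda S^2\times(0,\infty)$; following Novikov, I would lift it to a covering $\MM\to\M$ on which the energy-$e$ action $S_e$ becomes single valued. Using $\pi_1(S^2)=0$ and $\pi_2(S^2)=\Z$, the deck group is cyclic: denote by $\tau$ a generator and by $\Delta$ the constant difference between the values of $S_e$ on two consecutive sheets (equal to a period of $\sigma$). From a lift of $\alpha_e$ one builds a hierarchy of minmax classes $c_k(e)$, for instance as the $\inf$-$\sup$ of $S_e$ along continuous paths in $\MM$ from the lift to its $\tau^k$-translate; by construction $c_k(e)$ grows in $k$ at rate $\Delta$.

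The functional $S_e$ does not satisfy the Palais--Smale condition---critical sequences may degenerate in period---so a priori $c_k(e)$ need not be a critical value. Here I would invoke Struwe's monotonicity trick: each $e\mapsto c_k(e)$ is monotone and hence almost everywhere differentiable, and at every differentiability point one can extract a bounded-period Palais--Smale sequence that converges, modulo deck translations, to a genuine periodic orbit of energy $e$ with action $c_k(e)$.

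The main obstacle, and the point where the $S^2$ case differs substantially from the higher-genus treatment, is to rule out the possibility that the sequence of critical values $c_k(e)$ is accounted for by only finitely many geometrically distinct orbits. If $\gamma_1,\dots,\gamma_N$ were the full list, a Bangert--Hingston type iteration inequality would force $c_k(e)$ to grow asymptotically at rate $S_e(\gamma_j)/T(\gamma_j)$ for some $j$, and this common slope would have to equal $\Delta$; combining this with Morse index estimates for the iterates of $\alpha_e$ and of the $\gamma_j$'s, one would derive a contradiction. Since on $S^2$ every loop is contractible, one cannot exploit non-trivial free homotopy classes on $\Lambda S^2$ as in the higher-genus case, and the combinatorics of this last step---not the variational compactness---is the technical heart of the argument.
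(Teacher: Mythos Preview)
Your setup through the Struwe step is essentially what the paper does: lift the free-period action form to the universal cover $\MM\to\M$, obtain a single-valued primitive $A_e$ whose deck transformation $Z$ shifts it by $\int_{S^2}\sigma$, anchor on the waist from \cite{Asselle:2016qv}, and restrict to a full-measure set of energies where the minmax functions are differentiable so that bounded-period Palais--Smale sequences are available. But from there on the proposal diverges from the paper, and the last paragraph is a genuine gap.

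First, the minmax families. You index them only by the deck translation, taking $c_k(e)$ over paths from a lift of the waist to its $\tau^k$-translate. The paper instead uses a \emph{two}-parameter family $\PP_e(m_0,n_0,m_1,n_1)$ of paths joining the $Z^{n_0}$-translate of the $m_0$-th \emph{iterate} of the waist to the $Z^{n_1}$-translate of its $m_1$-th iterate, and then sets $c_e(m,n):=\inf_{(m',n')\neq(m,n)} c_e(m,n,m',n')$. The iteration index $m$ is what produces infinitely many genuinely different families; the deck translation is used only to normalise, via $c_e(m,n+k)=c_e(m,n)+k\int_{S^2}\sigma$, each $c_e(m,n_m)$ into the fixed window $[0,|\int_{S^2}\sigma|)$.

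Second, and more seriously, the contradiction you sketch does not work as stated. The growth rate of your $c_k$ in $k$ is $\Delta=\int_{S^2}\sigma$ simply from the deck-transformation behaviour of $A_e$ and path concatenation; it carries no information about the orbits, so comparing it to ``$S_e(\gamma_j)/T(\gamma_j)$'' yields nothing. Morse index estimates are not used directly either. The paper's mechanism is: (i) the \emph{non-mountain-pass theorem for high iterates} (Theorem~\ref{t:non_mountain}) shows that for every periodic orbit $\gamma_j$ and all $m>m(\gamma_j)$ the critical circle of $\gamma_j^m$ can be deleted from any essential family without destroying it; (ii) hence, under the finitely-many-orbits hypothesis, all essential families with minmax value in the window $[0,|\int_{S^2}\sigma|)$ are carried by one \emph{finite} set $\E$ of critical circles (Lemma~\ref{cl:finite_essential_family_1}); (iii) by Lemma~\ref{l:local_homology_finite_rank} each critical circle in $\E$ has a neighbourhood meeting the strict sublevel set in only finitely many components; (iv) a pigeonhole over this finite collection, applied to the infinite family indexed by $m$, yields two distinct $m_1,m_2$ sharing the same descent component, and concatenating the corresponding initial segments produces a path in $\PP_e(m_1,n_{m_1},m_2,n_{m_2})$ with maximum strictly below $c_e(m_1,n_{m_1})$, contradicting the definition of the latter. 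Your last paragraph should be replaced by this scheme; the non-mountain-pass theorem and the finite-components lemma are the technical heart, and they are not supplied by a growth-rate or Bangert--Hingston asymptotics argument.
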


We wish to stress that the existence of infinitely many periodic orbits on \emph{all} energy values in $(e_0(L),e_1(L,\sigma))$ is still an open problem. In Theorem~\ref{t:main}, as well as in \cite{Abbondandolo:2014rb, Asselle:2015ij, Asselle:2015sp}, a negligible subset of energies must be excluded due to a lack of compactness in the variational setting that is employed.   However, energy levels with only finitely many periodic orbits can be found above \cite{Ziller:1983lq, Benedetti:2016} as well as below \cite{Asselle:2016qv} the interval $[e_0(L),e_1(L,\sigma)]$.

For closed surfaces $M$ of genus at least one, any closed 2-form $\sigma$ on $M$ lifts to an exact 2-form on the universal cover of $M$. This allows to define the Ma\~n\'e critical value of the universal cover $c_u(L,\sigma)$ for any Tonelli Lagrangian $L:\Tan M\to\R$. We set $e_1^*(L,\sigma):=\min\{e_1(L,\sigma),c_u(L,\sigma)\}$ if $M$ has positive genus, and $e_1^*(L,\sigma):=e_1(L,\sigma)$ if $M=S^2$. The combination of Theorem~\ref{t:main} together with the above mentioned results in \cite{Asselle:2015ij, Asselle:2015sp, Asselle:2016qv}, yields the following statement about the multiplicity of periodic orbits on general closed surfaces.
\begin{thm}
Let $M$ be a closed surface, $L:\Tan M\to\R$ a Tonelli Lagrangian, and $\sigma$ a 2-form on $M$. For almost every $e\in(e_0(L),e_1^*(L,\sigma))$, the Lagrangian system of $(L,\sigma)$ possesses infinitely many periodic orbits with energy $e$.
\hfill\qed
\end{thm}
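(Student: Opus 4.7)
The plan is a straightforward case split on the topological type of $M$, matching the two-clause definition of $e_1^*(L,\sigma)$. If $M=S^2$, then by definition $e_1^*(L,\sigma)=e_1(L,\sigma)$, so the interval of energies in the statement is $(e_0(L),e_1(L,\sigma))$, and the conclusion is exactly Theorem~\ref{t:main}; in this case I would simply quote that theorem, which is the main new content of the present paper.

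If $M$ has positive genus, then $e_1^*(L,\sigma)=\min\{e_1(L,\sigma),c_u(L,\sigma)\}$, and I would combine two ingredients already available in the literature. First, I would invoke the existence, at every energy $e\in(e_0(L),e_1(L,\sigma))$, of a Taimanov-type waist for the free-period action functional of $(L,\sigma)$; for general Tonelli Lagrangians and non-exact magnetic forms this has been established by Asselle and Mazzucchelli in \cite{Asselle:2016qv}, generalizing the kinetic constructions of \cite{Taimanov:1991el, Taimanov:1992fs, Taimanov:1992sm, Contreras:2004lv}. Second, I would exploit the compactness mechanism for the Bangert-type minmax schemes generated by such a waist: since on a positive-genus surface $\sigma$ lifts to an exact $2$-form on the universal cover of $M$, keeping the energy strictly below $c_u(L,\sigma)$ controls the lift of the action functional to the universal cover of the free loop space, and this is precisely what allows one to recover the Palais--Smale condition on an almost-full set of energies; this is the framework developed in \cite{Asselle:2015ij, Asselle:2015sp} for magnetic geodesic flows and extended in \cite{Asselle:2016qv} to the Tonelli setting. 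Assembling the two ingredients produces infinitely many periodic orbits on almost every level in $(e_0(L),e_1^*(L,\sigma))$.

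The hard part is not this assembly but rather the sphere case handled by Theorem~\ref{t:main}: since $S^2$ is simply connected, there is no non-trivial deck transformation in the free loop space and Novikov's throwing-out-cycles strategy must be replaced by a genuinely different minmax scheme, which is the content of the present paper. Once Theorem~\ref{t:main} is granted, the combination with the results recalled above in the positive-genus case completes the proof, and no further argument beyond case-splitting and citation is required.
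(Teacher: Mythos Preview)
Your proof is correct and matches the paper exactly: the theorem is stated with a \hfill\qed immediately after it, the ``proof'' being the sentence just before, which says the result follows by combining Theorem~\ref{t:main} with \cite{Asselle:2015ij, Asselle:2015sp, Asselle:2016qv}. Your case split and citations reproduce precisely this.

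One correction to your closing commentary, though it does not affect the validity of the proof: it is not true that the free loop space of $S^2$ admits no non-trivial deck transformations. On the contrary, $\pi_1\big(W^{1,2}(\T;S^2)\big)\cong\Z$, and the paper's proof of Theorem~\ref{t:main} hinges on the deck transformation $Z$ of $\MM$ and the action shift~\eqref{e:action_shift}. The genuine distinction from the positive-genus case is that on $S^2$ the magnetic form $\sigma$ cannot be made exact by passing to any cover of the configuration space, so there is no Ma\~n\'e critical value $c_u(L,\sigma)$ available and the compactness mechanism must be organised differently.
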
 

\begin{rem}\label{r:oscillating}
The open interval $(e_0(L),e_1^*(L,\sigma))$ is not empty for instance if $M$ is orientable,  $\sigma$ is oscillating,  and the Lagrangian has the form of a kinetic energy $L(q,v)=\tfrac12g_q(v,v)$ for some Riemannian metric $g$ (see~\cite{Asselle:2015sp}); in such case, $e_0(L)=0$ is the minimal energy of the system. We recall that a 2-form $\sigma$ on an orientable surface is oscillating when it satisfies $\sigma_{q_-}<0$ and $\sigma_{q_+}>0$ for some $q_-,q_+\in M$. On non-orientable surfaces, any non-zero 2-form lifts to an oscillating 2-form on the orientation double cover.
\hfill\qed
\end{rem}

This paper is dedicated to the memory of Abbas Bahri. Bahri was interested in the problem of periodic orbits of magnetic geodesic flows. In a joint work with Taimanov \cite{Bahri:1998eu}, he established the existence of periodic magnetic geodesics with prescribed energy on closed configuration spaces of arbitrary dimension under the assumption that the analog of the Ricci curvature for the Lagrangian system is positive.

The paper is organized as follows. In  Section~\ref{s:action} we recall the variational setting for our periodic orbits problem: we provide the definition of the action 1-form $\eta_e$, and of its global primitive $A_e$ on the universal cover of the space of loops; at the end we will review the definition of the energy values $e_0$ and $e_1$, and the notion of a waist for magnetic Tonelli systems. In Section~\ref{s:proof} we provide the proof of Theorem~\ref{t:main}. 

\subsection*{Acknowledgments}
A.A. and L.A. are partially supported by the DFG grant AB
360/2-1 ``Periodic orbits of conservative systems below the Ma\~ n\'e critical energy value''. G.B. is partially supported by the DFG grant SFB 878. M.M. is partially supported by the ANR grants WKBHJ (ANR-12-BS01-0020) and COSPIN (ANR-13-JS01-0008-01). Part of this project was carried out while M.M. was visiting the Sobolev Institute of Mathematics in Novosibirsk (Russia), under the Program ``Short-Term Visits to Russia by Foreign Scientists'' of the Dynasty Foundation; M.M. wishes to thank the Foundation and Alexey Glutsyuk for providing financial support, and Iskander A. Taimanov for the kind hospitality.

\section{The primitive of the free-period action form}
\label{s:action}

\subsection{The variational principle}
Let $L:\Tan S^2\to\R$ be a Tonelli Lagrangian with associated energy function $E(q,v)=\partial_vL(q,v)v-L(q,v)$, and $\sigma$ a 2-form on $S^2$. 
Since we will be interested in the Euler-Lagrange dynamics on a given energy hypersurface $E^{-1}(e)$, for some fixed $e\in\R$, we can modify the Tonelli Lagrangian far from $E^{-1}(e)$ and assume without loss of generality that each restriction $L|_{\Tan_qM}$ coincides with a polynomial of degree 2 outside a compact set. Let $\M:= W^{1,2}(\T;S^2)\times(0,\infty)$, where $\T:=\R/\Z$ is the 1-periodic circle. For each energy value $e\in\R$, we consider the free-period action 1-form $\eta_e$ on $\M$ given by
\begin{align*}
\eta_e(\gamma,p)(\xi,q)= \diff S_e(\gamma,p)(\xi,q)+ \int_{\T } \sigma_{\gamma(t)}(\xi(t),\dot\gamma(t))\,\diff t,\\ (\gamma,p)\in\M,\ \ (\xi,q)\in\Tan_{(\gamma,p)}\M,
\end{align*}
where $S_e:\M\to\R$ denotes the free-period action functional
\begin{align*}
S_e(\gamma,p)= p \int_{\T } L(\gamma(t),\dot\gamma(t)/p)\,\diff t + p\,e.
\end{align*}
By the least action principle, $\eta_e$ vanishes at some $(\gamma,p)\in\M$ if and only if the $p$-periodic curve $\Gamma(t):=\gamma(t/p)$ is an orbit of the magnetic Tonelli system of $(L,\sigma)$, see e.g.\ \cite{Asselle:2014hc} and references therein. 

The 1-form $\eta_e$ is not exact if $\sigma$ is not exact. In order to work with a primitive of $\eta_e$, following Novikov \cite{Novikov:1981ef, Novikov:1982xy}, we will lift it to the universal cover of $\M$. We see $S^2$ as the unit sphere in $\R^3$, oriented in the usual way, and we fix the point $x_0=(-1,0,0)\in S^2$. We consider the universal cover \[\pi:\MM \to \M.\] 
As usual, we realize $\MM$ as the space of homotopy classes relative to the endpoints of continuous paths $u:[0,1]\to\M$ starting at $u(0)=(x_0,1)$. Here, we see $x_0$ as the constant loop at $x_0$. The projection map is given by $\pi([u])=u(1)$.
We have $\pi^*\eta_e=\diff A_e$, where the functional \[A_e:\MM\to \R\] is defined as follows. Given $[u]\in\MM$, we write $u=(\gamma,p)$, where $\gamma(s)\in W^{1,2}(\T;S^2)$ and $p(s)\in(0,\infty)$ for all $s\in[0,1]$. We see $\gamma$ as a map of the form $\gamma:[0,1]\times \T \to S^2$ by setting $\gamma(s,t):=\gamma(s)(t)$. We then set
\begin{align*}
A_e([u]):= S_e(u(1)) + \int_{[0,1]\times \T } \gamma^*\sigma.
\end{align*}

\begin{rem}\label{r:local}
Assume that $U\subsetneq S^2$ is a proper open subset, so that $\sigma|_U$ is exact with some primitive $\theta$. Let $\U\subset\MM$ be a connected component of the open set of those $[u]\in\MM$ such that the periodic curve $u(1)$ is contained in $U$. Up to an additive constant, the restriction $A_e|_{\U}$ is equal to $S_e'\circ\pi|_{\U}$, where $S_e':\M\to\R$ is the free-period action functional associated with the Lagrangian $L+\theta$, i.e.
\[
\tag*{\qed}
S_e'(\gamma,p)
 = 
p\int_{\T } L(\gamma(t),\dot\gamma(t)/p)\,\diff t + \int_{\gamma}\theta + p\,e.
\]
\end{rem}

It is well known that the fundamental group of the free loop space $W^{1,2}(\T;S^2)$ is isomorphic to $\Z$, and therefore so is the fundamental group of $\M$. A generator $[z]$ of $\pi_1(\M,(x_0,1))$ can be defined as follows. For each $s\in\T$, consider the affine plane $\Sigma_s\subset\R^3$ orthogonal to the vector $(0,\cos(2\pi s),-\sin(2\pi s))$ and passing through $x_0$. We denote by $\zeta(s)\in W^{1,2}(\T;S^2)$ the closed curve with constant Euclidean speed whose support is precisely the intersection $\Sigma_s\cap S^2$, its starting point is $\zeta(s)(0)=x_0$, and, for all $s\neq 0$, its orientation is such that the ordered pair $\partial_s\zeta(s)(t),\partial_t\zeta(s)(t)$ agrees with the orientation of $S^2$, see Figure~\ref{f:zeta}. 
\begin{figure}
\begin{center}
\begin{small}
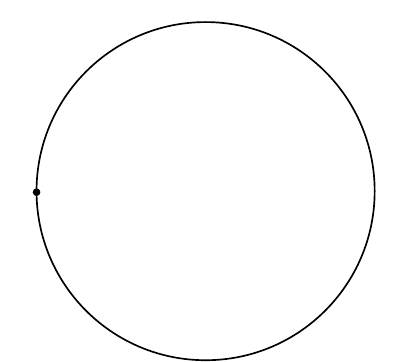 
\caption{The map $\zeta:\T\to W^{1,2}(\T;S^2)$.}
\label{f:zeta}
\end{small}
\end{center}
\end{figure}
We define 
$z:= (\zeta,1):\T \to \M$. 
The group of deck transformations of the universal cover $\MM$ is generated by 
\[Z:\MM \to \MM,\qquad Z([u]) = [z*u],\]
where $z*u(s)=z(2s)$ for all $s\in[0,1/2]$, and $z*u(s)=u(2s-1)$ for all $s\in[1/2,1]$. The action $A_e$ varies under such a transformation as
\begin{align}
\label{e:action_shift}
A_e\circ Z([u]) = A_e([u])+ \int_{S^2}\sigma.
\end{align}

\subsection{Iterated curves}
For each $v=(\gamma,p)\in\M$, we denote by $v^m=(\gamma^m,mp)\in\M$ its $m$-fold iterate, where 
$\gamma^m(t)=\gamma(mt)$.
The iteration map $\psi^m:\M\to\M$, $\psi^m(v)=v^m$, is smooth. We lift this map to a smooth map of the universal cover, so that the following diagram commutes
\begin{align*}
\xymatrix{
\MM \ar[r]^{\widetilde\psi^m} \ar[d]_{\pi} & \MM \ar[d]^{\pi} \\ 
\M \ar[r]^{\psi^m}  & \M 
}
\end{align*}
For instance, we can set $\widetilde\psi^m([u]):=[u^m]$, where
\begin{align*}
u^m(s) = 
\left\{
  \begin{array}{ll}
    (x_0,1+2s(m-1)) & \mbox{if }s\in[0,1/2], \vspace{5pt}\\ 
    u(2s-1)^m & \mbox{if }s\in[1/2,1].
  \end{array}
\right.
\end{align*}
A remarkable property of the iteration map is given by the non-mountain pass Theorem for high iterates, which was first established for electromagnetic Lagrangians in \cite[Theorem~2.6]{Abbondandolo:2014rb}, and extended to general Tonelli Lagrangians in \cite[Lemma~4.3 and proof of Theorem~1.2]{Asselle:2016qv}. As we explained in Remark~\ref{r:local}, $A_e$ coincides locally with the free-period action functional of a suitable Tonelli Lagrangian, and therefore the non-mountain pass Theorem for high iterates holds for $A_e$ as well.

\begin{thm}[Non-mountain pass Theorem for high iterates]
\label{t:non_mountain}
Let $[v]$ be a critical point of $A_e$ such that, for all $m\in\N$, the critical circle of $[v^m]$ is isolated in the set of critical points of $A_e$. There exists $m([v])\in\N$ such that, for all integers $m>m([v])$, the following holds. There exists an (arbitrarily small) open neighborhood $\W$ of the critical circle of $[v^m]$ such that, if we set $a:=A_e([v^m])$, the inclusion induces an injective map between path-connected components
\[\tag*{\qed}
\pi_0(\{A_e < a\})\hookrightarrow\pi_0(\{A_e < a\}\cup\W).\]
\end{thm}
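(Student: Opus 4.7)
My strategy is to reduce the statement for $A_e$ on $\MM$ to the non-mountain pass theorem already established for the free-period action functional of a Tonelli Lagrangian on $\M$ (see \cite[Theorem~2.6]{Abbondandolo:2014rb} in the electromagnetic case and \cite[Lemma~4.3]{Asselle:2016qv} in the general Tonelli case), using the local identification from Remark~\ref{r:local}. First, the critical point $[v]$ projects to a periodic orbit $v=(\gamma,p)\in\M$ of the system $(L,\sigma)$; since $\gamma(\T)\subset S^2$ is a closed curve, it is a proper compact subset of $S^2$. Fixing $q_0\in S^2\setminus\gamma(\T)$ and setting $U:=S^2\setminus\{q_0\}\cong\R^2$, the form $\sigma|_U$ admits a primitive $\theta$, and $L':=L+\theta$ is again Tonelli (a fiberwise-linear perturbation preserves superlinearity and positive-definite Hessian). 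Because $U$ is contractible, the open subset $V:=\{u\in\M : u(\T)\subset U\}$ of $\M$ is simply connected. For $m\in\N$ fixed, $\gamma^m(\T)=\gamma(\T)\subset U$, so $v^m\in V$, and the connected component $\U\subset\MM$ of $\pi^{-1}(V)$ containing $[v^m]=\widetilde\psi^m([v])$ is mapped homeomorphically onto $V$ by $\pi|_\U$. By Remark~\ref{r:local} there is a constant $C\in\R$ such that $A_e|_\U = S_e'\circ\pi|_\U + C$, where $S_e'$ denotes the free-period action functional of $L'$ at energy $e$.

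Next, I would invoke the non-mountain pass theorem for the Tonelli functional $S_e'$ on $\M$. Since $\pi|_\U$ is a homeomorphism and $V$ is open, the isolation hypothesis on the critical circle of $[v^m]$ in $\MM$ translates into the isolation of the critical circle of $v^m$ in the $S_e'$-critical set of $\M$. The cited references thus yield an integer $m([v])\in\N$ such that, for every $m>m([v])$, there exists an arbitrarily small open neighborhood $\W'\subset V$ of the critical circle of $v^m$ for which the inclusion $\{S_e'<a'\}\hookrightarrow\{S_e'<a'\}\cup\W'$ induces an injective map on $\pi_0$, where $a':=S_e'(v^m)=a-C$. Moreover, the proofs in \cite{Abbondandolo:2014rb, Asselle:2016qv} are local in nature: the path in $\{S_e'<a'\}$ joining any two points of $\W'\cap\{S_e'<a'\}$ can be chosen to lie inside an arbitrarily small enlargement of the critical circle, which we may take to be contained in $V$.

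Finally, pulling back through $\pi|_\U$, I would set $\W:=\pi|_\U^{-1}(\W')\subset\U$, an open neighborhood of the critical circle of $[v^m]$. Given two points $[p_0],[p_1]\in\W\cap\{A_e<a\}$, their projections $p_0,p_1$ lie in $\W'\cap\{S_e'<a'\}$ and, by the previous step, are joined by a path in $\{S_e'<a'\}\cap V$; lifting via the homeomorphism $\pi|_\U^{-1}$ produces a path in $\{A_e<a\}\cap\U$ connecting $[p_0]$ to $[p_1]$. Hence $\W\cap\{A_e<a\}$ is contained in a single path-component of $\{A_e<a\}$, which is equivalent to the claimed injectivity $\pi_0(\{A_e<a\})\hookrightarrow\pi_0(\{A_e<a\}\cup\W)$. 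The main subtlety of the argument is precisely this last step: the replacement paths provided by the non-mountain pass theorem for $S_e'$ a priori only lie in $\{S_e'<a'\}\subset\M$, and one must verify that they can be chosen inside $V$ so as to admit a lift to $\U$. Controlling this relies on the local character of the non-mountain pass arguments in the references, which confine the relevant analysis to an arbitrarily small neighborhood of the critical circle.
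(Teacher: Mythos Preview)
Your proposal is correct and follows precisely the approach indicated in the paper: the theorem is not proved from scratch there but is imported from \cite[Theorem~2.6]{Abbondandolo:2014rb} and \cite[Lemma~4.3]{Asselle:2016qv} via the local identification of $A_e$ with a Tonelli free-period action functional provided by Remark~\ref{r:local}. You have simply spelled out the details of this transfer, including the one genuine subtlety---that the connecting paths furnished by the non-mountain pass theorem for $S_e'$ can be taken to stay in the evenly covered neighborhood $V$---which is justified by the local (Gromoll--Meyer type) nature of those proofs.
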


\subsection{The critical values of the energy}
Let us single out two significant values of the energy. The first one is 
$e_0(L):=\max E(\cdot,0)$,
that is, the minimal energy $e$ such that the corresponding energy hypersurface $E^{-1}(e)$ projects onto the whole $S^2$. The second value $e_1(L,\sigma)\geq e_0(L)$, which depends also on the magnetic form $\sigma$, is defined as the supremum of the energies $e\geq e_0(L)$ verifying the following condition: there exists a finite collection $(\gamma_1,p_1),...,(\gamma_n,p_n)\in\M$ such that the $\gamma_i$'s are smooth pairwise disjoint loops, $E(\gamma_i(\cdot),\dot\gamma_i(\cdot)/p_i)\equiv e$ for all $i=1,...,n$, the multicurve $\gamma_1\cup...\cup\gamma_n$ is the oriented boundary of a positively oriented compact embedded surface $\Sigma\subseteq S^2$, and we have 
\begin{align*}
S_e(\gamma_1,p_1)+...+ S_e(\gamma_n,p_n) + \int_\Sigma \sigma < 0 .
\end{align*}
We recall that $e_1(L,\sigma)$ reduces to the classical Ma\~n\'e critical value of $L+\theta$ in case $\sigma$ is exact with primitive $\theta$,  see ~\cite{Asselle:2015sp}.

The proof of Theorem~\ref{t:main} will build on the following existence result, which was originally proved by Taimanov \cite{Taimanov:1991el, Taimanov:1992sm} in the case of electromagnetic Lagrangians (see also \cite{Contreras:2004lv} for an alternative proof), and further extended by Asselle and Mazzucchelli \cite[Theorem~6.1]{Asselle:2016qv} to the general case of magnetic Tonelli systems.

\begin{thm}
\label{t:local_min}
For every energy value $e\in(e_0(L),e_1(L,\sigma))$, the Lagrangian system of $(L,\sigma)$ possesses a non-self-intersecting periodic orbit $(\gamma_e,p_e)$ with energy $e$ such that every element in $\pi^{-1}(\gamma_e,p_e)$ is a local minimizer of the action functional $A_e$.
\hfill\qed
\end{thm}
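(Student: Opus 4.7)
My plan is to realize the waist as a minimizer of $A_e$ on a carefully chosen component of $\MM$ consisting of lifts of embedded loops, using the negative-action competitor supplied by the definition of $e_1(L,\sigma)$ to ensure a nontrivial infimum, and a geometric splitting argument to force embeddedness of the minimizer.

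By the definition of $e_1(L,\sigma)$, since $e<e_1(L,\sigma)$, there exist pairwise disjoint smooth loops $(\gamma_1,p_1),\dots,(\gamma_n,p_n)$ at energy $e$ bounding a positively oriented embedded surface $\Sigma\subseteq S^2$ with $\sum_i S_e(\gamma_i,p_i)+\int_\Sigma\sigma<0$. A standard simplification (discarding boundary components that contribute positively and restricting to an innermost disc) reduces this to the case of a single embedded loop $\gamma_0$ bounding a disc $D_0\subsetneq S^2$ with $S_e(\gamma_0,p_0)+\int_{D_0}\sigma<0$. Applying Remark~\ref{r:local} to the open set $U=S^2\setminus\{\mathrm{pt}\}\supset D_0$ with primitive $\theta$ of $\sigma|_U$, I can choose a lift $[u_0]\in\MM$ of $(\gamma_0,p_0)$ so that $A_e([u_0])$ equals $S_e(\gamma_0,p_0)+\int_{D_0}\sigma$, which is $<0$.

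Let $\U\subset\MM$ be the path-connected component of $[u_0]$ inside the open set of those lifts whose underlying loop is embedded, and set $c:=\inf_{\U}A_e\leq A_e([u_0])<0$. The key compactness input is that on any minimizing sequence $[u_k]\in\U$ with $u_k(1)=(\gamma_k,p_k)$, the hypothesis $e>e_0(L)$ combined with superlinearity of $L$ in the fiber produces, after a harmless modification of $L$ outside $E^{-1}(e)$, a quadratic lower bound of the form $L(q,v/p)+e\geq\delta|v|^2/p^2-C$. Plugging this into $A_e([u_k])\leq c+1$ confines both $p_k$ and $\|\dot\gamma_k\|_{L^2}$ to a compact subinterval of $(0,\infty)$; in particular the strict negativity $c<0$ prevents $p_k\to 0$ or $\gamma_k$ shrinking to a point. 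After extraction, $(\gamma_k,p_k)$ converges weakly in $\M$ to some $(\gamma_e,p_e)$ with $\gamma_e$ Lipschitz. If $\gamma_e$ were to acquire a transverse self-intersection in the limit, it would factor as a concatenation $\gamma'\cdot\gamma''$ of two loops bounding subdiscs $D',D''$ of $D_{\gamma_e}$; additivity of both the Tonelli action and the magnetic flux then yields $A_e([u'])+A_e([u''])=A_e([u_e])$ for compatible lifts in $\U$, and the geometric argument à la Taimanov furnishes a nearby embedded competitor (for instance by smoothing the corner at the crossing and rerouting along one of the two sub-loops) whose action is strictly below $c$, a contradiction. Hence $\gamma_e$ is embedded, lower semicontinuity of $A_e$ forces $c$ to be attained at a lift $[u_e]$ which, being interior to $\U$, is a critical point of $A_e$ and hence yields a smooth periodic orbit $(\gamma_e,p_e)$ at energy $e$ by the least-action principle. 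Since every other element of $\pi^{-1}(\gamma_e,p_e)$ is obtained from $[u_e]$ by iterated deck transformations, which by~\eqref{e:action_shift} shift $A_e$ only by a constant, each such lift is also a local minimum.

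The main obstacle is the geometric step forcing embeddedness of $\gamma_e$: the naive additivity $A_e([u'])+A_e([u''])=c$ alone does not produce a strictly smaller competitor (both sub-actions could equal $c/2>c$ since $c<0$), and one has to exploit \emph{strict} convexity of $L$ in the fibers together with the slack $e-e_0(L)>0$ to genuinely beat $c$ by an embedded perturbation near the self-intersection. A secondary technical issue, handled by the quadratic lower bound, is the Palais--Smale-type compactness preventing $p_k\to 0$ or $p_k\to\infty$ along the minimizing sequence.
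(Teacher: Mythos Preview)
The paper does not prove Theorem~\ref{t:local_min} at all: it is quoted as a black box from the literature (Taimanov for the electromagnetic case, and \cite[Theorem~6.1]{Asselle:2016qv} for the general Tonelli case), hence the \qed\ appended to the bare statement. So there is nothing to compare your argument to inside this paper; the relevant comparison is with those external proofs.

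Your outline captures the correct heuristic picture, but it contains a genuine gap, which you yourself flag at the end. Minimizing $A_e$ over the open set $\U$ of lifts of embedded loops does not, by itself, produce an embedded minimizer: a minimizing sequence may converge to a curve $\gamma_e$ on $\partial\U$, i.e.\ with a self-tangency or self-crossing, and your additivity argument $A_e([u'])+A_e([u''])=c$ cannot beat $c$ because $c<0$ makes each summand larger than $c$. The sentence ``one has to exploit strict convexity of $L$ \dots\ to genuinely beat $c$ by an embedded perturbation near the self-intersection'' is precisely the missing lemma, not a proof of it: resolving a figure-eight by corner-smoothing can produce either a single non-embedded curve or two disjoint curves, and in neither case do you stay in $\U$ with strictly smaller action without further work. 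In the cited references this difficulty is handled by minimizing not over single embedded loops but over a larger class of embedded multicurves (bounding a surface $\Sigma$, exactly as in the definition of $e_1(L,\sigma)$), where the cut-and-paste operations are available; one then argues separately that the minimizer in that class is connected. A second, smaller gap is that even if $\gamma_e$ has no transverse self-intersections you must also exclude that it is a multiple cover $\alpha^k$ of a simple curve, which again escapes $\U$. Your compactness step, by contrast, is essentially sound once you use that for embedded loops on $S^2$ the magnetic contribution is bounded by $\int_{S^2}|\sigma|$, which together with $e>e_0(L)$ gives the needed two-sided bound on $p_k$.
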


\section{Proof of the Main Theorem}
\label{s:proof}
In this section we carry out the proof of Theorem~\ref{t:main}. Since the case where the magnetic 2-form $\sigma$ is exact is covered by \cite{Abbondandolo:2014rb}, we focus on the case where $\sigma$ is not exact, so that
\begin{align}
\label{e:sigma_non_exact}
\int_{S^2}\sigma \neq 0.
\end{align}

\subsection{Minimax procedures}
For each energy value $e\in(e_0(L),e_1(L,\sigma))$, consider the local minimizer $(\gamma_e,p_e)$ of $A_e$ given by Theorem~\ref{t:local_min}, and choose an arbitrary $u_e\in\pi^{-1}(\gamma_e,p_e)$. We fix an arbitrary energy value 
\[e_* \in (e_0(L),e_1(L,\sigma))\] 
such that, for all $m\in\N$, the iterated critical point $[u_{e_*}^m]$ belongs to a critical circle that is isolated in $\crit(A_{e_*})$ (if there is no energy value $e_*$ with such a property, there are infinitely many periodic orbits on every energy level in the range $(e_0(L),e_1(L,\sigma))$). The critical points $[u_{e_*}^m]$ are still local minimizers of $A_{e_*}$, as they are iterates of a local minimizer, see \cite[Lemma~3.1]{Abbondandolo:2015lt} and Remark~\ref{r:local}.

Given any subset $Y\subset\MM$, for each $m\in\N$ we will write 
\begin{align*}
Y^m
:=
\widetilde\psi^m(Y)
=
\big\{[y^m]\ \big|\ [y]\in Y\big\}.
\end{align*}
The Palais-Smale condition holds locally for the free-period action functional of Tonelli Lagrangians, see \cite[Prop.~3.12]{Contreras:2006yo} or \cite[Lemma~5.3]{Abbondandolo:2013is}. This, together with Remark~\ref{r:local}, implies that the functional $A_{e_*}$ satisfies the Palais-Smale condition locally as well. Therefore, a sufficiently small bounded open neighborhood $\W$ of the critical circle of $[u_{e_*}]$ does not contain other critical circles of $A_{e_*}$ and satisfies
\begin{align*}
 &\inf_{\partial \W} A_{e_*} > A_{e_*}([u_{e_*}]),\\
&\W^{m_0}\cap Z^{n_1}(\W^{m_1}) =\varnothing\ \mbox{ whenever }\ (m_0,0)\neq(m_1,n_1).
\end{align*}
For any $e\in(e_0(L),e_1(L,\sigma))$, we denote by $M_e$ the closure of the set of local minimizers of $A_e|_{\W}$. For all $m_0,m_1\in\N$ and $n_0,n_1\in\Z$ such that $(m_0,n_0)\neq(m_1,n_1)$, we denote by 
\[\PP_e(m_0,n_0,m_1,n_1)\]
the family of continuous paths $\Theta:[0,1]\to\MM$ such that $\Theta(0)\in Z^{n_0}(M_e^{m_0})$ and $\Theta(1)\in Z^{n_1}(M_e^{m_1})$. We define the corresponding minmax value
\begin{align*}
c_e(m_0,n_0,m_1,n_1):= \inf\big\{ \max A_e\circ\Theta\ \big|\ \Theta\in\PP_e(m_0,n_0,m_1,n_1) \big\}.
\end{align*}

\begin{lem}\label{l:monotonicity}
There is an open neighborhood $I\subset(e_0(L),e_1(L,\sigma))$ of $e_*$ such that
\begin{itemize}
\item[(i)] $M_e$ is a non-empty compact set for all $e\in I$,
\item[(ii)] for each $e,e'\in I$, we have $\max A_{e'}|_{M_{e'}}<\inf A_e|_{\partial \W}$,
\item[(iii)] for each $m_0,m_1\in\N$ and $n_0,n_1\in\Z$, the function  $e\mapsto c_e(m_0,n_0,m_1,n_1)$ is well defined and monotone increasing in $I$.
\end{itemize}
\end{lem}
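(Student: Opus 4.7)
The main observation is that $A_e$ depends affinely on $e$: for any $[u]\in\MM$ one has $A_e([u])-A_{e_*}([u])=(e-e_*)\,p(u(1))$, where $p(u(1))>0$ is the period coordinate of the endpoint. Since $\overline\W$ is bounded, the period is pinched between two positive constants on $\overline\W$, hence $A_e\to A_{e_*}$ uniformly on $\overline\W$ as $e\to e_*$; the analogous statement holds on every deck translate $Z^{n_i}(\overline{\W^{m_i}})$ via the shift relation $A_e\circ Z=A_e+\int_{S^2}\sigma$.

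To prove (i) and (ii), one combines this uniform convergence with the strict inequality $\inf_{\partial\W}A_{e_*}>A_{e_*}([u_{e_*}])$: this produces an open interval $I_0\ni e_*$ and an $\eta>0$ such that $\inf_{\partial\W}A_e>A_e([u_{e_*}])+\eta$ for every $e\in I_0$. Hence the infimum of $A_e|_{\overline\W}$ is attained in the interior $\W$, which gives $M_e\neq\varnothing$. Every element of $M_e$ is a critical point of $A_e$ in $\W$ with action bounded above by $\inf_{\partial\W}A_e$, so the local Palais-Smale condition (transferred to $A_e$ through Remark~\ref{r:local}) forces $M_e$ to be compact. For (ii), both $\max A_{e'}|_{M_{e'}}$ and $\inf_{\partial\W}A_e$ remain close to their values at $e_*$ for $e,e'$ close to $e_*$, so the strict inequality at $e_*$ survives after shrinking $I_0$ to a smaller interval $I$.

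For (iii), the sets $Z^{n_i}(M_e^{m_i})$ are non-empty and compact by (i) (the iterates $M_e^{m_i}$ inherit these properties from $M_e$ via the continuity of $\widetilde\psi^{m_i}$), and path-connectivity of $\MM$ yields $\PP_e(m_0,n_0,m_1,n_1)\neq\varnothing$, so $c_e$ is well defined. For monotonicity, fix $e_1<e_2$ in $I$, take $\Theta_2\in\PP_{e_2}(m_0,n_0,m_1,n_1)$ with $\max A_{e_2}\circ\Theta_2\leq c_{e_2}+\epsilon$, and prepend/append short segments inside $Z^{n_0}(\W^{m_0})$ and $Z^{n_1}(\W^{m_1})$ joining the endpoints of $\Theta_2$ to points of $Z^{n_0}(M_{e_1}^{m_0})$ and $Z^{n_1}(M_{e_1}^{m_1})$, producing a path $\Theta_1\in\PP_{e_1}(m_0,n_0,m_1,n_1)$. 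On the middle portion $\Theta_2$ the pointwise inequality $A_{e_1}\leq A_{e_2}$ is automatic; on the end segments, the disjointness of the deck translates (forced by the hypothesis $(m_0,n_0)\neq(m_1,n_1)$ together with the assumed disjointness of the $\W^{m}$'s) obliges $\Theta_2$ to cross $\partial Z^{n_0}(\W^{m_0})$, whence $\max A_{e_2}\circ\Theta_2\geq\inf_{\partial Z^{n_0}(\W^{m_0})}A_{e_2}$, thereby bounding the action on the appended segments. Taking infima produces $c_{e_1}\leq c_{e_2}$.

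The main obstacle is the construction of the appended segments within each $Z^{n_i}(\W^{m_i})$ so that their $A_{e_1}$-action stays below the required bound. This rests on a deformation argument using the local Palais-Smale condition and the isolation of each critical circle $[u_{e_*}^{m_i}]$: for $I$ sufficiently small, a pseudo-gradient flow of $A_{e_1}$ drives low-action points in $Z^{n_i}(\W^{m_i})$ towards $Z^{n_i}(M_{e_1}^{m_i})$ without exiting the open set or raising the action above the allowed threshold.
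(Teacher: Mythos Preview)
The paper itself omits the proof of this lemma, merely stating that it is ``entirely analogous to the arguments in \cite[Lemmas~3.1--3.3]{Abbondandolo:2014rb}''. Your sketch follows precisely that approach---the affine dependence of $A_e$ on $e$, uniform convergence on $\overline\W$ to obtain (i) and (ii), and the tail-attachment argument (the content of \cite[Lemma~3.2]{Abbondandolo:2014rb}, also invoked verbatim later in the paper's proof of Lemma~\ref{l:Palais_Smale}) for the monotonicity in (iii)---and is correct at the level of detail appropriate here.
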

\begin{proof}
The proof is entirely analogous to the arguments in \cite[Lemmas~3.1--3.3]{Abbondandolo:2014rb} and it will be omitted.
\end{proof}

\subsection{The valley of short curves with low period}\label{ss:valley}

We equip our sphere $S^2$ with an arbitrary Riemannian metric $g$, and $\M$ with the Riemannian metric 
\begin{equation}
\label{e:Riemannian_metric}
\begin{split}
\langle (\xi,r),(\eta,s) \rangle
=
\int_{\T} \Big(g(\xi,\eta) + g(D_t\xi,D_t\eta) \Big)\,\diff t
+
rs,
\\
\forall (\xi,r),(\eta,s)\in\Tan_{(\gamma,p)}\M,
\end{split}
\end{equation}
where $D_t$ denotes the covariant derivative associated to $g$. 
The space $\M$ is not complete with respect to the Riemannian metric \eqref{e:Riemannian_metric}, nor is its universal cover equipped with the pulled-back Riemannian metric. Indeed, there are Cauchy sequences $\{(\gamma_n,p_n)\ |\ n\in\N\}\subset\M$ such that $p_n\to0$. However, it turns out that this does not pose any problem while applying arguments from non-linear analysis to the functional $A_e$. Indeed, the functional $A_e$ has a ``valley'' near the non-complete ends of $\M$, as we will review now (see \cite[Section~3]{Contreras:2006yo} and \cite[Section~3]{Asselle:2014hc} for analogous arguments in slightly different settings).

We write $\|\dot\gamma\|_{L^2}$ for the $L^2$-norm of the derivative of any curve $\gamma\in W^{1,2}(\T;S^2)$ measured with respect to $g$, i.e.
\begin{align*}
 \|\dot\gamma\|_{L^2}^2 = \int_{\T} g(\dot\gamma(t),\dot\gamma(t))\,\diff t.
\end{align*}
We introduce the open subsets
\begin{align}
\label{e:U_delta}
\U_\tau:=
\big\{
(\gamma,p)\in\M\ \big|\ \|\dot\gamma\|_{L^2}^2<\tau\,p,\ \ p<\tau
\big\},\qquad\tau>0.
\end{align}
If $\tau$ is small enough, $\U_\tau$ is connected and evenly covered by $\pi:\MM\to\M$. Namely, there exists a connected component $\V_\tau\subset\pi^{-1}(\U_\tau)$ such that $\pi^{-1}(\U_\tau)$ can be written as a disjoint union 
\begin{align*}
\pi^{-1}(\U_\tau)
=
\bigsqcup_{n\in\Z}
Z^n(\V_\tau).
\end{align*}
We choose such a connected component $\V_\tau$ so that, for all $[u]=[(\gamma,p)]\in\V_\tau$ with $\gamma(1)$  stationary curve at some point $q\in S^2$, we have
$$A_e([u])=p(1)\big(L(q,0)+e\big).$$

\begin{lem}\label{l:constant_loops}
For all $\tau>0$ sufficiently small, we have
\begin{align*}
\inf A_e|_{\V_\tau}=0,
\qquad
\inf A_e|_{\partial\V_\tau}>0.
\end{align*}
Moreover
\begin{equation*}
\lim_{\tau\to 0^+}\big( \sup A_e|_{\V_\tau} \big)=0 .
\end{equation*}
\end{lem}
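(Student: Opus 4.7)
The plan is to analyze $A_e$ on $\V_\tau$ by exploiting that elements of $\V_\tau$ project to $\U_\tau$, where loops are simultaneously short (small $L^2$-energy of the derivative) and of small period. I would work with a concrete path representative built out of a geodesic cone, which turns the magnetic contribution $\int\gamma^*\sigma$ into an ordinary surface integral of $\sigma$ over a small disk, and then use Tonelli convexity and the quadratic growth assumption (arranged by modifying $L$ far from $E^{-1}(e)$) to bound the Lagrangian part.

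First, I would observe that for $(\gamma,p)\in\U_\tau$, Cauchy--Schwarz gives $\|\dot\gamma\|_{L^2}^2<\tau p<\tau^2$, hence $\mathrm{dist}_g(\gamma(t),\gamma(0))\leq\|\dot\gamma\|_{L^1}\leq\|\dot\gamma\|_{L^2}<\tau$. So if $\tau$ is smaller than the injectivity radius of $g$, every such $\gamma$ lies in the geodesic ball of radius $\tau$ around $q:=\gamma(0)$ and can be contracted to $q$ through the cone $\Gamma_s(t):=\exp_q(s\exp_q^{-1}\gamma(t))$, $s\in[0,1]$, which stays in $\U_\tau$ and bounds a disk $D\subset S^2$ with $\mathrm{area}(D)\leq C\|\dot\gamma\|_{L^2}^2\leq C\tau p$ by isoperimetric/area estimates for geodesic cones.

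Next, I would choose, for each $[u]=[(\gamma,p)]\in\V_\tau$, the representative path in $\M$ that successively (i) shrinks the period at the constant loop $x_0$ from $1$ down to some $p_*<\tau$, (ii) slides the constant loop from $x_0$ to $q$ along an arc in $S^2$, (iii) adjusts the period from $p_*$ to $p$, and (iv) expands the constant loop at $q$ to $\gamma$ via the geodesic cone. Stages (i)--(iii) have $\gamma\equiv\text{const}$, so $\gamma^*\sigma\equiv 0$, while stage (iv) contributes $\int_D\sigma$. Since the intermediate point $(q,p)$ is a constant loop where the normalization defining $\V_\tau$ gives $A_e=p(L(q,0)+e)$, this path indeed lies in $\V_\tau$, and
\begin{align*}
A_e([u])=p\int_{\T}L(\gamma,\dot\gamma/p)\,\diff t+pe+\int_D\sigma.
\end{align*}
Using fiberwise strict convexity, $L(q,v)\geq L(q,0)+\partial_vL(q,0)v+c|v|^2$ for $|v|$ in any bounded range, applied to $v=\dot\gamma(t)/p$; integrating and replacing $\gamma(t)$ by $q$ in $L(\cdot,0)$ (with $C^1$-error controlled by $\mathrm{diam}(\gamma)\leq\|\dot\gamma\|_{L^2}$) gives
\begin{align*}
p\int_{\T}L(\gamma,\dot\gamma/p)\,\diff t\geq pL(q,0)+\int_\gamma\theta_L+\frac{c}{p}\|\dot\gamma\|_{L^2}^2-C_1\tau p,
\end{align*}
where $\theta_L$ is the 1-form $v\mapsto\partial_vL(q,0)v$. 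Stokes applied on $D$ yields $|\int_\gamma\theta_L|\leq\|\diff\theta_L\|_\infty\mathrm{area}(D)\leq C_2\tau p$ and $|\int_D\sigma|\leq\|\sigma\|_\infty\mathrm{area}(D)\leq C_3\tau p$. Setting $c_0:=e-e_0(L)>0$, so that $L(q,0)+e\geq c_0$ uniformly in $q$, we obtain for $\tau$ sufficiently small
\begin{align*}
A_e([u])\geq p(L(q,0)+e)+\frac{c}{p}\|\dot\gamma\|_{L^2}^2-C\tau p\geq\frac{c_0}{2}p+\frac{c}{p}\|\dot\gamma\|_{L^2}^2\geq 0.
\end{align*}
Evaluating on constant loops $(q,p)$ with $p\to 0^+$ shows $\inf A_e|_{\V_\tau}=0$. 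On $\partial\V_\tau$ one has either $p=\tau$ (yielding $A_e\geq c_0\tau/2$) or $\|\dot\gamma\|_{L^2}^2=\tau p$ (yielding an extra $c\tau$ term), so $\inf A_e|_{\partial\V_\tau}>0$. Finally, the quadratic upper bound $L(q,v)\leq C_4(|v|^2+1)$ gives $p\int L(\gamma,\dot\gamma/p)\,\diff t\leq C_4(\|\dot\gamma\|_{L^2}^2/p+p)\leq 2C_4\tau$, so $A_e([u])\leq(2C_4+e+C_3\tau)\tau=O(\tau)$ on $\V_\tau$, proving $\lim_{\tau\to 0^+}\sup A_e|_{\V_\tau}=0$.

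The main delicate point, rather than the analytic estimates, is the topological bookkeeping in step two: the base point $(x_0,1)$ lies outside $\U_\tau$, so the path representative momentarily exits $\U_\tau$ during the initial period change, and one must verify that the resulting class in $\MM$ ends up in the distinguished component $\V_\tau$ and not in some translate $Z^n(\V_\tau)$. This is settled by noting that on the constant loop $(q,p)$ my representative computes $A_e=p(L(q,0)+e)$, which is exactly the normalization characterizing $\V_\tau$ among all components of $\pi^{-1}(\U_\tau)$.
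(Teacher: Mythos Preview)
Your argument is correct and yields the same estimates as the paper, but the technical route differs. The paper covers $S^2$ by two open balls, picks a primitive $\theta_i$ of $\sigma$ on each, and (via Remark~\ref{r:local} and the normalization of $\V_\tau$ on constant loops) writes $A_e([u])=S_e(\gamma,p)+\int_\gamma\theta_{\iota(\gamma)}$ on $\V_\tau$; the drift and magnetic line integrals are then controlled by the inequality $\bigl|\int_\gamma\omega\bigr|\leq\tfrac14\|\diff\omega\|_{L^\infty}\|\dot\gamma\|_{L^2}^2$, cited from \cite[Lemma~7.1]{Abbondandolo:2013is}. You instead construct an explicit geodesic-cone representative and bound the magnetic and drift contributions as surface integrals over a disk of area $O(\|\dot\gamma\|_{L^2}^2)$. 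Your approach is more self-contained (no external lemma) at the cost of the bookkeeping in your last paragraph about landing in $\V_\tau$ rather than a deck translate; the paper's reduction to a classical free-period functional makes that step invisible and the rest of the computation immediate.

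One small correction: drop the caveat ``for $|v|$ in any bounded range''. Since $L$ has been modified to be a fiberwise degree-two polynomial outside a compact set, the convexity lower bound $L(q,v)\geq L(q,0)+\partial_vL(q,0)v+c\,g_q(v,v)$ holds globally in $v$, and you genuinely need the global statement because $|\dot\gamma/p|$ is unbounded on $\U_\tau$. A second, more cosmetic point: for stage~(iv) to keep your representative inside the component $\V_\tau$ you need the cone loops $\Gamma_s$ to remain in $\U_\tau$; this holds for $\tau$ small since $\|\dot\Gamma_s\|_{L^2}\approx s\,\|\dot\gamma\|_{L^2}$, but it deserves a sentence.
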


\begin{proof}
We cover the sphere with two open balls $D_1,D_2\subset S^2$, and choose a primitive $\theta_i$ of $\sigma$ on $D_i$. Let $\tau>0$ be sufficiently small so that for any $\gamma\in W^{1,2}(\T;S^2)$ with length less than $\tau$ there exists $\iota(\gamma)\in\{1,2\}$ such that $\gamma$ is entirely contained in $D_{\iota(\gamma)}$. The restriction of the functional $A_e$ to $\V_{\tau}$ takes the following form: for each $[u]\in\V_{\tau}$ with $(\gamma,p):=\pi([u])$, we have
\begin{align*}
A_e([u])
=
p \int_{\T} L(\gamma(t),\dot\gamma(t)/p)\,\diff t + p\,e + \int_\gamma \theta_{\iota(\gamma)}.
\end{align*}
Since we are assuming that the restriction of the Tonelli Lagrangian $L$ to any fiber of $\Tan M$ is a polynomial of degree 2 outside a compact set, there exist  constants $0<h_1<h_2$ such that, for all $(q,v)\in\Tan M$, we have
\begin{equation}\label{e:lower_bound_L}
\begin{split}
L(q,v) &\geq L(q,0) + \partial_vL(q,0)v + h_1\,g_q(v,v)\\
& \geq -E(q,0) + \partial_vL(q,0)v + h_1\,g_q(v,v)\\
& \geq -e_0(L)  + \partial_vL(q,0)v + h_1\,g_q(v,v),
\end{split}
\end{equation}
and
\begin{align}\label{e:upper_bound_L}
L(q,v) &\leq h_2 \big(g_q(v,v)+1\big).
\end{align}
We denote by $\lambda$ the 1-form on $S^2$ given by $\partial_vL(\cdot,0)$. The lower bound~\eqref{e:lower_bound_L} implies that, for all $[u]\in\V_\tau$ with $(\gamma,p):=\pi([u])$, we have
\begin{align*}
A_e([u]) & \geq h_1 \frac{\|\dot\gamma\|_{L^2}^2}{p} + \underbrace{\big(e-e_0(L)\big)}_{>0}\,p - \left| \int_{\gamma}(\lambda + \theta_{\iota(\gamma)}) \right|\\
& \geq  h_1 \frac{\|\dot\gamma\|_{L^2}^2}{p} + \big(e-e_0(L)\big)\,p - \frac14 \|\diff \lambda + \underbrace{\diff\theta_{\iota(\gamma)}}_\sigma\|_{L^\infty}\,\|\dot\gamma\|_{L^2}^2
\end{align*}
where the latter inequality follows from \cite[Lemma~7.1]{Abbondandolo:2013is}. This readily implies that $A_e>0$ on $\V_\tau$ provided 
\begin{align*}
\frac{h_1}{\tau} > \frac14  \|\diff \lambda + \sigma\|_{L^\infty}.
\end{align*}
Assume now that $[u]\in\partial\V_\tau$. If $p=\tau$, we have
\begin{align*}
A_e([u]) > (e-e_0(L))\,\tau >0.
\end{align*}
If  $p<\tau$, then $\|\dot\gamma\|_{L^2}^2=p\,\tau$, and therefore
\begin{align*}
A_e([u]) & \geq  h_1 \tau  - \frac14 \|\diff \lambda + \sigma\|_{L^\infty}\,\tau^2 >0.
\end{align*}
Overall, this proves that $\inf A_e|_{\partial\V_\tau}>0$.

Inequality~\eqref{e:upper_bound_L} implies that, for all $[u]\in\V_\tau$ with $(\gamma,p):=\pi([u])$, we have
\begin{align*}
A_e([u])
& \leq
h_2 \frac{\|\dot\gamma\|_{L^2}^2}{p} + h_2\,p + e\,p + \int_{\gamma} \theta_{\iota(\gamma)}\\
& \leq
h_2 \frac{\|\dot\gamma\|_{L^2}^2}{p} + h_2\,p + e\,p + \frac14 \|\sigma\|_{L^\infty}\,\|\dot\gamma\|_{L^2}^2\\
& \leq h_2\,\tau + h_2\,\tau + e\,\tau + \frac14  \|\sigma\|_{L^\infty} \,\tau^2,
\end{align*}
where, as before, the second inequality follows from \cite[Lemma~7.1]{Abbondandolo:2013is}. This readily implies that $\sup A_e|_{\V_\tau} \to 0$ as $\tau\to0^+$, which, together with the fact that $A_e>0$ on $\V_\tau$, also implies that $\inf A_e|_{\V_\tau} = 0$.
\end{proof}

\subsection{Essential families}\label{ss:essential_families}

Let us fix an energy value $e\in I$. We say that a union of critical circles
\[
\E \subset \crit  A_e\cap \{ A_e=c_e(m_0,n_0,m_1,n_1)\}
\]
is an \textbf{essential family} for $\PP_e(m_0,n_0,m_1,n_1)$ when for every neighborhood $\U$ of $\E $ there exists a path $\Theta\in\PP_e(m_0,n_0,m_1,n_1)$ whose image $\Theta([0,1])$ is contained in the union $\U\cup\{A_e<c_e(m_0,n_0,m_1,n_1)\}$.

We denote by $\Idiscr$ the subset of those $e\in(e_0(L),e_1(L,\sigma))$ such that the set of critical points $\crit(A_e)$ is a union of isolated critical circles (that is, the periodic orbits with energy $e$ are isolated). Notice that  every energy level $e\in(e_0(L),e_1(L,\sigma))\setminus \Idiscr$ contains infinitely many periodic orbits. The existence of essential families can be guaranteed on generic energy levels in $\Idiscr$. The precise statement is the following.

\begin{lem}\label{l:Palais_Smale}
There is a subset $I'\subseteq I$ of full Lebesgue measure such that, for all $e\in I'\cap\Idiscr$, $m_0,m_1\in\N$, and $n_0,n_1\in\Z$ with $(m_0,n_0)\neq(m_1,n_1)$, the space of paths $\PP_e(m_0,n_0,m_1,n_1)$ admits an essential family.
\end{lem}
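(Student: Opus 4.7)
The plan is to apply Struwe's monotonicity trick to the family $e \mapsto c_e(m_0, n_0, m_1, n_1)$, combined with the local Palais--Smale property of $A_e$ (via Remark~\ref{r:local}) and the action barrier at short periods (Lemma~\ref{l:constant_loops}) to run a standard minimax deformation argument on the region of $\MM$ picked out by the Struwe estimate.

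By Lemma~\ref{l:monotonicity}(iii), for every quadruple $(m_0,n_0,m_1,n_1) \in \N^2 \times \Z^2$ the function $e \mapsto c_e(m_0,n_0,m_1,n_1)$ is monotone increasing on $I$, hence differentiable outside a null set. Taking the intersection over the countably many such quadruples yields a full-measure subset $I' \subseteq I$ on which each of these functions is differentiable. Fix $e \in I' \cap \Idiscr$, write $c := c_e(m_0,n_0,m_1,n_1)$, and let $D$ denote the derivative of $e' \mapsto c_{e'}(m_0,n_0,m_1,n_1)$ at $e$. The crucial identity $A_{e'}([u]) - A_e([u]) = p(e'-e)$, with $p$ the period coordinate of $\pi([u])$, drives the estimate: for a sequence $e_n \searrow e$ and nearly optimal paths $\Theta_n \in \PP_{e_n}(m_0,n_0,m_1,n_1)$ satisfying $\max A_{e_n}\circ\Theta_n \leq c_{e_n} + (e_n - e)$, a direct comparison shows that, for $n$ large, every $[u] \in \Theta_n$ with $A_e([u]) \geq c - (e_n - e)$ has $p \leq D+3$. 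A symmetric bound holds for a sequence $e_n \nearrow e$.

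Now suppose, for contradiction, that $\PP_e(m_0,n_0,m_1,n_1)$ admits no essential family. Since $e \in \Idiscr$, the critical circles are isolated, and by local Palais--Smale (Remark~\ref{r:local}) only finitely many critical circles of $A_e$ at level $c$ meet the bounded region $\{p \leq D+4\}$; call their union $\mathcal{K}$. By hypothesis applied to $\mathcal{K}$, there is a neighborhood $\U \supseteq \mathcal{K}$ such that no path in $\PP_e(m_0,n_0,m_1,n_1)$ has image contained in $\U \cup \{A_e < c\}$. Shrinking $\U$ and choosing $\varepsilon > 0$ small, the local Palais--Smale condition yields a uniform lower bound $\|\diff A_e\| \geq \delta > 0$ on the set $\{|A_e - c| < \varepsilon\} \cap \{p \leq D+4\} \setminus \U$. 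Integrate an appropriate pseudo-gradient vector field supported in this set and apply it to a Struwe path $\Theta_n$ with $n$ large: the deformation strictly lowers $A_e$ on the high-action portion of $\Theta_n$, which by Step~2 stays inside $\{p \leq D+3\}$, while leaving the low-action portion unchanged and hence still separated from the short-curve valley $\V_\tau$ by the barrier of Lemma~\ref{l:constant_loops}. This yields a path in $\PP_e(m_0,n_0,m_1,n_1)$ with image in $\U \cup \{A_e < c\}$, contradicting the choice of $\U$; hence an essential family must exist.

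The main obstacle is that the Palais--Smale condition for $A_e$ holds only locally, because $\MM$ fails to be complete at the end $p \to 0^+$. The Struwe bound is what unlocks the argument: it confines the high-action portion of the nearly optimal paths to a range of periods where local Palais--Smale is applicable, while Lemma~\ref{l:constant_loops} guarantees that the deformation cannot escape through the non-complete end by going through the short-curve valley.
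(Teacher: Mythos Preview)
Your approach is essentially the paper's: Struwe's monotonicity trick to bound the period on the high-action portion of nearly-optimal paths, local Palais--Smale via Remark~\ref{r:local}, the valley barrier of Lemma~\ref{l:constant_loops}, and a pseudo-gradient deformation showing that the critical circles at level $c$ with bounded period form an essential family. Two technical points you elide are handled explicitly in the paper. First, your paths $\Theta_n$ lie in $\PP_{e_n}$, not $\PP_e$, since their endpoints sit in $M_{e_n}$ rather than $M_e$; before deforming, the paper appends short tails inside $Z^{n_i}(\W^{m_i})$ using Lemma~\ref{l:monotonicity}(ii) to land in $\PP_e$. Second, completeness of the flow is not automatic: the paper cuts off the pseudo-gradient near $\pi^{-1}(\U_{\tau_0})$ and then runs a three-case analysis (flow line meets the shrunken neighborhood $\U'$ of the critical set; flow line meets some $Z^n(\V_{\tau_1})$; neither) rather than simply asserting that the low-action portion of the path is left fixed---note that some $Z^n(\V_\tau)$ may have action near $c$, so the support condition $\{|A_e-c|<\varepsilon\}$ alone does not keep the flow away from the non-complete end. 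Your remark about a symmetric bound for $e_n\nearrow e$ is unnecessary; only a decreasing sequence is used.
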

\begin{proof}
The proof goes along the lines of the one of \cite[Lemma~3.5]{Abbondandolo:2014rb}, but the fact that we are working on the universal cover of $\M$ with the functional $A_e$ requires some variations of the original argument, and therefore we provide full details for the reader's convenience.

For all $m_0,m_1\in\N$ and $n_0,n_1\in\Z$ such that $(m_0,n_0)\neq(m_1,n_1)$, we denote by $I(m_0,n_0,m_1,n_1)$ the subset of those $e'\in I$ such that the function 
\begin{align}
\label{e:minmax_function}
e\mapsto c_e(m_0,n_0,m_1,n_1)
\end{align}
is differentiable at $e'$. By Lemma~\ref{l:monotonicity}(iii), the function~\eqref{e:minmax_function} is monotone increasing in $e$, and therefore $I(m_0,n_0,m_1,n_1)$ is a full measure subset of $I$. We define the subset $I'$ of the statement as
\begin{align*}
I':=  \!\!\!\!\!\!\!\!\! \bigcap_{(m_0,n_0)\neq(m_1,n_1)} \!\!\!\!\!\!\!\!\! I(m_0,n_0,m_1,n_1).
\end{align*}
Being a countable intersection of full Lebesgue measure subsets of $I$, the subset $I'\subset I$ has full Lebesgue measure as well.

Now, we fix $e\in I'\cap\Idiscr$ and two distinct $(m_0,n_0),(m_1,n_1)\in\N\times\Z$. In order to simplify the notation, we will just write $c_e$ and $\PP_e$ for $c_e(m_0,n_0,m_1,n_1)$ and $\PP_e(m_0,n_0,m_1,n_1)$ respectively. We choose an arbitrary strictly decreasing sequence $\{e_\alpha\ |\ \alpha\in\N\}\subset I$ such that $e_\alpha\to e$ as $\alpha\to\infty$, and we set $\epsilon_\alpha:=e_\alpha-e$. By definition of $I'$, there exists  $k_0=k_0(e)>0$ such that 
\begin{align*}
|c_{e_\alpha}-c_e| \leq k_0\epsilon_\alpha,\qquad
\forall \alpha\in\N.
\end{align*}
For all $[u]=[(\gamma,p)]\in\MM$ such that $A_e([u])\geq c_e-\epsilon_\alpha$ and $A_{e_\alpha}([u])\leq c_{e_\alpha}+\epsilon_\alpha$, the period $p(1)$ of the curve $u(1)\in\M$ can be bounded as
\begin{align*}
p(1)
=
\frac{A_{e_\alpha}([u])-A_{e}([u])}{\epsilon_\alpha}
\leq
\frac{c_{e_\alpha}+\epsilon_\alpha-c_{e}+\epsilon_\alpha}{\epsilon_\alpha}
\leq
k_0+2=:k_1,
\end{align*}
while the action $A_e([u])$ can be bounded as 
\begin{align*}
A_e([u])\leq A_{e_\alpha}([u])\leq c_{e_\alpha} + \epsilon_\alpha 
\leq c_e + (k_0+1)\epsilon_\alpha
\leq c_e + k_1\epsilon_\alpha.
\end{align*}
We introduce the subspaces
\begin{align*}
\X_r := 
\Big\{
[u]=[(\gamma,p)]\in\MM\ \Big|\ p(1)\leq r
\Big\},
\qquad
r>0.
\end{align*}
By the definition of the minmax value $c_{e_\alpha}$ and by the estimates that we have just provided, for each $\alpha\in\N$ there exists a path $\Theta_\alpha\in\PP_{e_\alpha}$ such that 
\begin{align*}
\Theta_\alpha([0,1])
\subset 
\{A_e \leq c_e-\epsilon_\alpha\} \cup \big( \X_{k_1} \cap \{A_e\leq c_e + k_1\epsilon_\alpha\} \big).
\end{align*}
We recall that, by the definition of the spaces of paths $\PP_{e_\alpha}$, we have that 
\[\Theta_\alpha(i)\in Z^{n_i}(M_{e_\alpha}^{m_i}) \subset Z^{n_i}(\W^{m_i}),\qquad i=0,1.\]
Lemma~\ref{l:monotonicity}(ii) readily implies that we can attach two suitable tails to the path $\Theta_\alpha$: we can find two continuous paths
\begin{align*}
\Phi_\alpha: & [0,1]\to Z^{n_0}(\W^{m_0})\cap\{A_e\leq A_e(\Theta_\alpha(0))\},\\
\Psi_\alpha: & [0,1]\to Z^{n_1}(\W^{m_1})\cap\{A_e\leq A_e(\Theta_\alpha(1))\},
\end{align*}
such that $\Phi_\alpha(0)\in Z^{n_0}(M_{e}^{m_0})$, $\Phi_\alpha(1)=\Theta_\alpha(0)$, $\Psi_\alpha(0)=\Theta_\alpha(1)$, and $\Psi_\alpha(1)\in Z^{n_1}(M_{e}^{m_1})$; see \cite[Lemma~3.2]{Abbondandolo:2014rb} for a proof of this elementary fact. Since the open set $\W$ is bounded, there exists $k_2>k_1$ large enough such that
\begin{align*}
Z^{n_0}(\W^{m_0}) \cup Z^{n_1}(\W^{m_1})\subset \X_{k_2}.
\end{align*}
We define the continuous path
\begin{align*}
&\Upsilon_\alpha:[0,1] \to \{A_e \leq c_e-\epsilon_\alpha\} \cup \big( \X_{k_2} \cap \{A_e \leq c_e+k_1\epsilon_\alpha\} \big),\\
&\Upsilon_\alpha(s):=
\left\{
  \begin{array}{lll}
    \Phi_\alpha(3s), &  & s\in[0,1/3], \vspace{5pt}\\ 
    \Theta_\alpha(3(s-1/3)), &  & s\in[1/3,2/3], \vspace{5pt}\\ 
    \Psi_\alpha(3(s-2/3)), &  & s\in[2/3,1]. \\ 
  \end{array}
\right.
\end{align*}
Notice that $\Upsilon_\alpha\in\PP_e$, and $\max A_e\circ\Upsilon_\alpha\to c_e$ as $\alpha\to\infty$.

We claim that $\crit(A_e)\cap A_e^{-1}(c_e)\cap\X_{k_2+2}$ is an essential family for $\PP_e$. Let $\U\subset\MM$ be an arbitrary open set such that 
\begin{align*}
\U\cap\crit(A_e) = \crit(A_e)\cap A_e^{-1}(c_e)\cap\X_{k_2+2}. 
\end{align*}
Our goal for the remaining of the proof is to deform one of our paths $\Upsilon_\alpha$, away from its endpoints, so that the modified path will have image inside $\{A_e<c_e\}\cup\U$. Notice that, since $e\in\Idiscr$, if $\mu>0$ is small enough we have
\begin{align}\label{e:critical_points_in_U}
\U\cap\crit(A_e) = \crit(A_e)\cap A_e^{-1}[c_e-\mu,c_e+\mu]\cap\X_{k_2+2},
\end{align}
and $\U$ contains at most finitely many critical circles of $A_e$. In particular, we can find a smaller open neighborhood $\U'\subset\U$ of $\U\cap\crit(A_e)$ and some $\ell>0$ such that every smooth path $\Theta:[0,1]\to\overline\U$ with $\Theta(0)\in\U'$ and $\Theta(1)\in\partial\U$ has length at least $\ell$. Here, the length is the one measured with respect to the pull-back of the Riemannian metric \eqref{e:Riemannian_metric} to the universal cover $\MM$.

Consider the open subsets $\U_\tau\subset\M$ introduced in~\eqref{e:U_delta}, and the selected connected components of their preimage $\V_\tau\subset\pi^{-1}(\U_\tau)$. 
Since $e\in\Idiscr$, the set $M_e$ is the union of finitely many critical circles of $A_e$. In particular, there exists $\tau_2>0$ small enough such that
\begin{align*}
 \{\Upsilon_\alpha(0),\Upsilon_\alpha(1)\ |\ \alpha\in\N\} \cap \pi^{-1}(\U_{\tau_2}) =\varnothing.
\end{align*}
If needed, we reduce $\tau_2>0$ so that the open subset $\U_{\tau_2}$ is connected and evenly covered by $\pi:\MM\to\M$. By Lemma~\ref{l:constant_loops}, there exist $\delta>0$ and $0<\tau_1<\tau_2$ such that, for all $n\in\N$, 
\begin{align}\label{e:estimates_near_stationary_curves_outer}
\inf A_e|_{\partial(Z^n(\V_{\tau_2}))}-\sup A_e|_{Z^n(\V_{\tau_1})}\geq\delta.
\end{align}
Finally, we fix an index $\alpha\in\N$ large enough so that 
\begin{align}\label{e:bound_epsilon_alpha}
k_1\epsilon_\alpha<\min\big\{\mu,\delta\big\}.
\end{align}

In the following, we will denote by $\|\cdot\|$ the Riemannian norm induced by the Riemannian metric~\eqref{e:Riemannian_metric}. With a slight abuse of notation, we will denote by $\|\cdot\|$ also the Riemannian norm that is pulled-back to the universal cover $\MM$.
Fix $\tau_0\in(0,\tau_1)$ and introduce a vector field on $\MM$ of the form $V:=f\,\nabla A_e$, for some suitable smooth function $f:\MM\to[-1,0]$, such that 
\begin{itemize}
\item[(i)] $\|V([u])\|\leq2$ for all $[u]\in\MM$,

\item[(ii)] $\mathrm{supp}(V)\subset A_e^{-1}[c_e-\epsilon_{\alpha-1},c_e + k_1\epsilon_{\alpha-1}]\setminus \pi^{-1}(\U_{\tau_0})$,

\item[(iii)] $\diff A_e([u])V([u])\leq - \min\big\{\|\nabla A_e([u])\|^2,1\big\}$ for all $[u]\in \MM\setminus\pi^{-1}(\U_{\tau_1})$ such that $A_e([u])\in[c_e-\epsilon_{\alpha},c_e + k_1\epsilon_{\alpha}]$.
\end{itemize}
We denote by $\phi_t:\MM\to\MM$ the flow of $V$. This flow is complete. Indeed, since the vector field $V$ is uniformly bounded, the flow lines that may not be defined for all positive time are those that enter all sets $\X_r$, for $r>0$ arbitrarily small. Since $V$ is non-negatively  proportional to $-\nabla A_e$, its flow lines are non-negative reparametrizations of those of $-\nabla A_e$. Finally, if a flow line of $-\nabla A_e$ is not defined for all positive times, then it must enter the set $\pi^{-1}(\U_{\tau_0})$ (see \cite[Proposition~3.1(2)]{Asselle:2014hc} for a proof of this fact), but this latter set is outside the support of $V$. Actually, since $\|V\|\leq 2$, we have
\begin{align*}
\phi_1(\X_{k_2})\subset\X_{k_2+2}.
\end{align*}

The free-period action form $\eta_e$ satisfies a generalized Palais-Smale condition on subsets of $\M$ where the period is bounded from above and bounded away from zero, see \cite[Theorem~2.1(2)]{Asselle:2014hc}. Moreover, for each sequence $\{(\gamma_n,p_n)\ |\ n\in\N\}\subset\M$ such that $p_n\to0$ and $\|\eta_e(\gamma_n,p_n)\|\to0$ as $n\to\infty$, we have $\|\dot\gamma_n\|^2_{L^2}/p_n\to0$ as $n\to\infty$,
see \cite[Theorem~2.1(1)]{Asselle:2014hc}. In particular, $(\gamma_n,p_n)$ belongs to $\U_{\tau_0}$ for $n$ large enough.  This, together with~\eqref{e:critical_points_in_U}, implies that  there exists a constant $\nu\in(0,1)$ such that
\begin{align}\label{e:nabla_A_e_bounded_away_from_zero}
\| \nabla A_e([u]) \| \geq \nu,
\quad
\forall [u]\in A_e^{-1}[c_e-\mu,c_e+\mu]\cap\X_{k_2+2} \setminus(\pi^{-1}(\U_{\tau_0})\cup\U').
\end{align}

We fix an index $\beta\geq\alpha$ large enough so that $k_1\epsilon_\beta<\min\big\{ \ell\nu,\nu^2 \big\}$, which together with~\eqref{e:bound_epsilon_alpha} implies
\begin{align}\label{e:bound_epsilon_beta}
k_1\epsilon_\beta < \min\big\{\mu,\delta,\ell\nu,\nu^2  \big\}.
\end{align}
The composition $\phi_1\circ\Upsilon_{\beta}$ belongs to $\PP_{e}$. We claim that its image $\phi_1\circ\Upsilon_{\beta}([0,1])$ is contained in $\{A_e<c_e\}\cup\U$, which sets our goal for the proof. First of all, since $A_e$ does not increase along the flow lines of $\phi_t$, we have 
\begin{align}
\label{e:upper_bound_path}
A_e(\phi_t\circ\Upsilon_{\beta})\leq A_e(\Upsilon_{\beta})\leq c_e+k_1\epsilon_\beta,\qquad\forall t\in[0,1].
\end{align}
There are three possible cases to consider:
\begin{itemize}
\item If $\phi_t\circ\Upsilon_{\beta}(s)\in \U'$ for some $t\in[0,1]$ and $\phi_1\circ\Upsilon_{\beta}(s)\not\in\U$, Equations \eqref{e:nabla_A_e_bounded_away_from_zero}, \eqref{e:bound_epsilon_beta}, and \eqref{e:upper_bound_path} imply that
\begin{align*}
\qquad\qquad A_e(\phi_1\circ\Upsilon_{\beta}(s))
&
=
A_e(\phi_t\circ\Upsilon_{\beta}(s)) 
+
\int_t^1 
\diff A_e(\phi_r\circ\Upsilon_{\beta}(s))V(\phi_r\circ\Upsilon_{\beta}(s))\,\diff r\\
&
\leq c_e + k_1\epsilon_\beta - \nu \int_t^1 \|V(\phi_r\circ\Upsilon_{\beta}(s))\|\,\diff r
\\
& \leq
c_e + k_1\epsilon_\beta - \ell\nu\\
& < c_e.
\end{align*}

\item If $\phi_t\circ\Upsilon_{\beta}(s)\in\pi^{-1}(\U_{\tau_1})$ for some $t\in[0,1]$, then, since $\phi_t\circ\Upsilon_{\beta}(s)\not\in\pi^{-1}(\U_{\tau_2})$, Equations \eqref{e:estimates_near_stationary_curves_outer},  \eqref{e:bound_epsilon_beta}, and \eqref{e:upper_bound_path} imply
\begin{align*}
A_e(\phi_1\circ\Upsilon_{\beta}(s))
\leq
A_e(\phi_t\circ\Upsilon_{\beta}(s)) 
\leq
c_{e} + k_1\epsilon_{\beta} - \delta
< c_e.
\end{align*}

\item If $\phi_t\circ\Upsilon_{\beta}(s)\not\in \U'\cup \pi^{-1}(\U_{\tau_1})$ for all $t\in[0,1]$, then property~(iii) in the definition of $V$ above, together with Equations~\eqref{e:nabla_A_e_bounded_away_from_zero}, \eqref{e:bound_epsilon_beta}, and \eqref{e:upper_bound_path}, implies 
\begin{align*}
\qquad A_e(\phi_1\circ\Upsilon_{\beta}(s))
&
=
A_e(\Upsilon_{\beta}(s)) 
+
\int_0^1 
\diff A_e(\phi_r\circ\Upsilon_{\beta}(s))V(\phi_r\circ\Upsilon_{\beta}(s))\,\diff r\\
&
=
c_e + k_1\epsilon_{\beta}
-
\int_0^1 
\|\diff A_e(\phi_r\circ\Upsilon_{\beta}(s))\|^2\,\diff r\\
&\leq c_e + k_1\epsilon_{\beta} - \nu^2\\
&<c_e.
\end{align*}
\end{itemize}
Overall, we showed that, for an arbitrary $s\in[0,1]$, if $\phi_1\circ\Upsilon_{\beta}(s)$ is not contained in $\U$, then it is contained in the sublevel set $\{A_e<c_e\}$.
\end{proof}

\begin{lem}\label{l:iterated_non_mountain_pass}
For each $e\in I'\cap\Idiscr$ and $[v]\in\crit(A_e)$, there exists a constant $m([v])\in\N$ with the following property. Consider the critical circle $\mathcal{C}$ of a critical point $Z^n([v^m])$, where $n\in\Z$ and $m>m([v])$. If $\E$ is an essential family containing $\mathcal C$, then $\E\setminus\mathcal{C}$ is an essential family for the same space of paths as well.
\end{lem}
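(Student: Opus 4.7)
The plan is to use the non-mountain pass Theorem~\ref{t:non_mountain} to eliminate $\mathcal{C}$ from essential families. The constant $m([v])$ will be precisely the one furnished by Theorem~\ref{t:non_mountain} applied to $[v]$.

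First I would transport the non-mountain pass property from $[v^m]$ to its deck translate $Z^n([v^m])$, whose critical circle is $\mathcal{C}$. Since $Z^n$ is a homeomorphism of $\MM$ and, by~\eqref{e:action_shift}, shifts $A_e$ by the constant $n\int_{S^2}\sigma$, if $\W_0$ is the neighborhood furnished by Theorem~\ref{t:non_mountain} at level $a_0:=A_e([v^m])$, then $\W':=Z^n(\W_0)$ is an open neighborhood of $\mathcal{C}$ for which the inclusion induces an injection
\[
\pi_0(\{A_e<c_e\})\hookrightarrow\pi_0(\{A_e<c_e\}\cup\W'),
\]
where $c_e:=A_e(\mathcal{C})$ is the common critical level of $\E$. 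Moreover $\W'$ can be chosen arbitrarily small.

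Next I would verify essentiality of $\E\setminus\mathcal{C}$. Fix any open neighborhood $\U$ of $\E\setminus\mathcal{C}$. Since $e\in\Idiscr$, the circle $\mathcal{C}$ is isolated in $\crit(A_e)$ and in particular lies at positive distance from the closed set $\E\setminus\mathcal{C}$; by shrinking $\W'$ and, if necessary, replacing $\U$ by $\U\setminus\overline{\W'}$ (which is still an open neighborhood of $\E\setminus\mathcal{C}$), I can arrange $\overline{\U}\cap\overline{\W'}=\varnothing$ and also that $\W'$ is disjoint from the endpoint critical circles of the minmax problem, while preserving the injectivity above. Then $\U\cup\W'$ is an open neighborhood of $\E$, and the essentiality of $\E$ produces a path $\Theta\in\PP_e(m_0,n_0,m_1,n_1)$ with image contained in $\U\cup\W'\cup\{A_e<c_e\}$.

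Finally, I would modify $\Theta$ into a new path $\Theta'\in\PP_e(m_0,n_0,m_1,n_1)$ whose image avoids $\W'$ altogether. Lebesgue's covering lemma applied to the open cover $\{\U,\W',\{A_e<c_e\}\}$ of the compact set $\Theta([0,1])$ yields a finite partition $0=r_0<r_1<\ldots<r_N=1$ such that every $\Theta([r_j,r_{j+1}])$ is contained in one of the three open sets. Let $[\alpha_k,\beta_k]$, for $k=1,\ldots,K$, denote the maximal unions of consecutive subintervals whose $\Theta$-image lies in $\W'$. The disjointness $\overline{\U}\cap\overline{\W'}=\varnothing$ forces the subintervals adjacent to each $[\alpha_k,\beta_k]$ to be mapped into $\{A_e<c_e\}$ (since a point in $\W'$ cannot arise as a limit of $\U$-points); hence I can pick $\alpha_k'<\alpha_k<\beta_k<\beta_k'$, with the intervals $[\alpha_k',\beta_k']$ pairwise disjoint and contained in $(0,1)$, such that $\Theta(\alpha_k'),\Theta(\beta_k')\in\{A_e<c_e\}$ and $\Theta([\alpha_k',\beta_k'])\subset\W'\cup\{A_e<c_e\}$. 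The non-mountain pass injectivity then delivers a path $\tau_k$ in $\{A_e<c_e\}$ from $\Theta(\alpha_k')$ to $\Theta(\beta_k')$, and replacing each $\Theta|_{[\alpha_k',\beta_k']}$ by $\tau_k$ (suitably reparameterized) produces the required $\Theta'$. The hard part is the clean execution of this modification: the essential ingredients are the finiteness of the partition (from Lebesgue's lemma) and the disjointness of the closures $\overline{\U}$ and $\overline{\W'}$, which together make the replacement both continuous and well-localized. The case where $\mathcal{C}$ is itself a local minimum of $A_e$ (so that $\W'\cap\{A_e<c_e\}=\varnothing$) is subsumed by the same argument, since the injectivity remains valid and forces the path to enter $\W'$ only through the level set $\{A_e=c_e\}$.
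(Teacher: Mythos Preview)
Your proof is correct and follows the same route as the paper's: transport the non-mountain pass property along $Z^n$, arrange disjoint neighborhoods of $\mathcal{C}$ and of $\E\setminus\mathcal{C}$, use essentiality of $\E$ to get a path in their union with $\{A_e<c_e\}$, and then invoke the $\pi_0$-injectivity to excise the $\mathcal{C}$-neighborhood. Your Lebesgue-number argument makes explicit the path surgery that the paper records in a single sentence; the one point the paper states that you leave implicit is why the endpoints $\Theta(0),\Theta(1)$ lie strictly in $\{A_e<c_e\}$ (they are isolated local minimizers and $A_e$ satisfies Palais--Smale locally, which forces the minmax value strictly above).
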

\begin{proof}
We set 
$a_{m,n}:=A_e(Z^n([v^m]))$, where $m\in\N$ and $n\in\Z$.
By Theorem~\ref{t:non_mountain}, there exists $m([v])\in\N$ such that, for all integers $m>m([v])$, the following statement holds. There exists an (arbitrarily small) open neighborhood $\W$ of the critical circle of $[v^m]$ such that the inclusion induces an injective map between path-connected components 
\[ \pi_0\big(\{A_e<a_{m,0}\}\big) \hookrightarrow \pi_0\big(\{A_e<a_{m,0}\}\cup\W\big).
\] 
For every $n\in\Z$, we denote by $\W_n:=Z^n(\W)$ the corresponding neighborhood of the critical circle $\mathcal C$ of $Z^n([v^m])$. Clearly, the inclusion induces an injective map 
\begin{align}\label{e:injective_pi_0}
 \pi_0\big(\{A_e<a_{m,n}\}\big) \hookrightarrow \pi_0\big(\{A_e<a_{m,n}\}\cup\W_n\big).
\end{align}
Now, assume that $\mathcal C$ belongs to an essential family $\E $ for $\PP_e(m_0,n_0,m_1,n_1)$. In particular $a_{m,n}=c_e(m_0,n_0,m_1,n_1)$.

We require the neighborhood $\W$ to be small enough so that for all neighborhoods $\W'$ of $\E\setminus\mathcal C$ sufficiently small, we have $\overline{\W}_n\cap \overline{\W'}=\varnothing$. The existence of such a disjoint $\overline{\W'}$ is guaranteed by the fact that the set of critical points of $A_e$ comes in isolated critical circles. Since, by Lemma~\ref{l:Palais_Smale}, $\PP_e(m_0,n_0,m_1,n_1)$ admits an essential family, there exists a continuous path $\Theta\in\PP_e(m_0,n_0,m_1,n_1)$ whose image is contained in the union 
\[\W_n\cup\W'\cup\{A_e<c_e(m_0,n_0,m_1,n_1)\}.\] 
Notice that
\begin{align}\label{e:endpoints_below}
\max\big\{A_e(\Theta(0)),A_e(\Theta(1))\big\} < c_e(m_0,n_0,m_1,n_1). 
\end{align}
Indeed, $\Theta(0)$ and $\Theta(1)$ belong to distinct critical circles that are isolated local minimizers of $A_e$, and this latter functional satisfies the Palais-Smale condition locally.

By~\eqref{e:endpoints_below} and since the map~\eqref{e:injective_pi_0} is injective, there exists another path $\Theta'\in\PP_e(m_0,n_0,m_1,n_1)$ whose image is contained in the union 
\[\W'\cup\{A_e<c_e(m_0,n_0,m_1,n_1)\}.\] 
Therefore, $\E\setminus\mathcal C$ is also an essential family for $\PP_e(m_0,n_0,m_1,n_1)$.
\end{proof}

Now, let $\Ifin$ be the (possibly empty) subset of those energy values $e\in\Idiscr$ such that there are only finitely many (non-iterated) periodic orbits with energy $e$. In order to prove Theorem~\ref{t:main}, all we need to do is to prove that the intersection $I'\cap\Ifin$ is empty. We will show this in Theorem~\ref{t:I'}, after exploring what would happen on energy values in $I'\cap\Ifin$.

\begin{lem}\label{cl:finite_essential_family_1}
For each energy level $e\in I'\cap\Ifin$ and compact interval $[a_0,a_1]\subset\R$, there exists a finite union of critical circles $\mathcal{E}\subset\crit(A_e)$ such that, for all $m_0,m_1\in\N$ and $n_0,n_1\in\Z$ with $c_e(m_0,n_0,m_1,n_1)\in[a_0,a_1]$, $\mathcal{E}$ contains an essential family for $\PP_e(m_0,n_0,m_1,n_1)$.
\end{lem}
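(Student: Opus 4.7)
The plan is to construct $\mathcal{E}$ explicitly from the finite list of primitive orbits provided by the hypothesis $e\in\Ifin$ and to verify that it contains an essential family for every relevant minmax class by pruning, via Lemma~\ref{l:iterated_non_mountain_pass}, the one produced by Lemma~\ref{l:Palais_Smale}. Concretely, since $e\in\Ifin$ I would fix lifts $[v_1],\ldots,[v_k]\in\crit(A_e)$ of the (finitely many) non-iterated periodic orbits of energy $e$. A direct check from the definitions of $A_e$ and of the iteration map $\widetilde\psi^m$, using the 1-periodicity of $\gamma$ in $t\in\T$, gives the multiplicative identity $A_e(\widetilde\psi^m([v]))=mA_e([v])$, so that together with~\eqref{e:action_shift} every critical point of $A_e$ has the form $Z^n([v_i^m])$ and satisfies
\[
A_e\bigl(Z^n([v_i^m])\bigr) = m\,A_e([v_i]) + n\int_{S^2}\sigma.
\]
Setting $M:=\max_i m([v_i])$, where $m([v_i])\in\N$ is provided by Lemma~\ref{l:iterated_non_mountain_pass}, I would then define $\mathcal{E}$ to be the union of all critical circles of $Z^n([v_i^m])$ with $1\leq i\leq k$, $1\leq m\leq M$ and $mA_e([v_i])+n\int_{S^2}\sigma\in[a_0,a_1]$. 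Assumption~\eqref{e:sigma_non_exact} guarantees $\int_{S^2}\sigma\neq 0$, so for each of the $kM$ pairs $(i,m)$ the admissible values of $n$ number at most $1+(a_1-a_0)/|\int_{S^2}\sigma|$, making $\mathcal{E}$ a finite union of critical circles.

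To verify the essential family property, I would fix an arbitrary $(m_0,n_0,m_1,n_1)$ with $c:=c_e(m_0,n_0,m_1,n_1)\in[a_0,a_1]$ and consider the essential family provided by Lemma~\ref{l:Palais_Smale}. Inspecting its proof, one can take $\E_0:=\crit(A_e)\cap A_e^{-1}(c)\cap\X_{k_2+2}$, which is finite: for each $i$ only finitely many iterates $[v_i^m]$ belong to $\X_{k_2+2}$ because their period grows linearly with $m$, and for each such pair $(i,m)$ the level equation $mA_e([v_i])+n\int_{S^2}\sigma=c$ admits at most one $n\in\Z$. Starting from $\E_0$, I would then iterate Lemma~\ref{l:iterated_non_mountain_pass} to discard, one by one, each critical circle in $\E_0$ of the form $Z^n([v_i^m])$ with $m>M\geq m([v_i])$; finiteness of $\E_0$ ensures the procedure terminates after finitely many steps, and the essential family that remains is contained in $\mathcal{E}$.

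The two bookkeeping points on which everything rests are the iteration formula $A_e([v^m])=mA_e([v])$, which is a short direct computation from the definition of $A_e$ plus the periodicity of $\gamma$ in $t$, and the finiteness of the essential family furnished by Lemma~\ref{l:Palais_Smale}. Neither is a genuine obstacle, so the crux of the argument really lies in exploiting the non-triviality $\int_{S^2}\sigma\neq 0$ coming from~\eqref{e:sigma_non_exact}: this is precisely what prevents the deck-transformation orbit of a given low iterate from filling up the bounded action window $[a_0,a_1]$, and hence what forces $\mathcal{E}$ to be finite.
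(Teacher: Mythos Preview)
Your proposal is correct and follows essentially the same route as the paper's proof: define $\mathcal E$ from low iterates of the finitely many primitive orbits, use $\int_{S^2}\sigma\neq 0$ to bound the relevant deck translates, and prune the essential family coming from Lemma~\ref{l:Palais_Smale} via repeated application of Lemma~\ref{l:iterated_non_mountain_pass}. The only minor deviation is your use of the iteration identity $A_e([v^m])=mA_e([v])$, which is indeed valid for the specific lift $\widetilde\psi^m$ defined in the paper but which the paper's argument sidesteps by applying the shift formula~\eqref{e:action_shift} directly to each of the finitely many pairs $(i,m)$ with $m\le m_{\max}$.
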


\begin{proof}
Let $(\gamma_1,p_1),...,(\gamma_r,p_r)$ be the only non-iterated periodic orbits with energy $e$, where $r$ is some natural number, and choose $[v_i]\in\pi^{-1}(\gamma_i,p_i)$ for all $i=1,...,r$. Consider the constants $m([v_i])\in\N$ given by Lemma~\ref{l:iterated_non_mountain_pass}, so that if we remove the critical circle of any $Z^n([v_i^m])$ with $n\in\Z$ and $m>m_{\max}$ from an essential family contained in $A_e^{-1}(c_e(m_0,n_0,m_1,n_1))$, the result is still an essential family for the same space of paths. We set
\begin{align*}
m_{\max} := \max\big\{ m([v_1]), ..., m([v_r]) \big\} \in \N
\end{align*}
By Equations~\eqref{e:action_shift} and~\eqref{e:sigma_non_exact}, we infer that there exists $n_{\max}\in\N$ such that 
$A_e(Z^n([v_i^m]))\not\in[a_0,a_1]$ for all $i\in\{1,...,r\}$, $m\in\N$, and $n\in\Z$ with $m\leq m_{\max}$ and $|n|> n_{\max}$. We claim that the statement of the lemma holds taking
\begin{align*}
\E := 
\Big\{
Z^n([v_i^m])\
\Big|\
i\in\{1,...,r\},\  1\leq m\leq m_{\max},\  |n|\leq n_{\max}
\Big\}.
\end{align*}
Indeed, consider $m_0,m_1\in\N$ and $n_0,n_1\in\Z$ such that $c_e(m_0,n_0,m_1,n_1)\in[a_0,a_1]$. Let $\E'$ be an essential family for $\PP_e(m_0,n_0,m_1,n_1)$, whose existence is guaranteed by Lemma~\ref{l:Palais_Smale}. By Lemma~\ref{l:iterated_non_mountain_pass}, if we remove from $\E'$ all the critical circles of periodic orbits of the form $Z^n([v_i^m])$ for $m>m_{\max}$, the resulting set is still an essential family for $\PP_e(m_0,n_0,m_1,n_1)$. Therefore, $\E'\cap\E$ is an essential family for $\PP_e(m_0,n_0,m_1,n_1)$.
\end{proof}

Let $m_0\in\N$ and $n_0\in\Z$. For all $m_1\in\N$, and $n_1\in\Z$ we know that $c_e(m_0,n_0,m_1,n_1)$ is bounded from below by $\min A_e|_{Z^{n_0}(M_e^{m_0})}$. Hence, the following quantity is a well-defined real number:
\begin{align*}
c_e(m_0,n_0)
:=
\inf
\left\{
c_e(m_0,n_0,m_1,n_1)\  \left|\
  \begin{array}{@{}l@{}}
    m_1\in\N,\ n_1\in\Z \\ 
    \mbox{with }(m_1,n_1)\neq(m_0,n_0)  
  \end{array}
\right.\right\}.
\end{align*}
Notice that the deck transformation $Z^k$ induces a homeomorphism between the spaces of paths $\PP_e(m_0,n_0,m_1,n_1)$ and $\PP_e(m_0,n_0+k,m_1,n_1+k)$, and we have
\begin{align*}
 c_e(m_0,n_0+k,m_1,n_1+k) = c_e(m_0,n_0,m_1,n_1) + k \int_{S^2}\sigma,\qquad\forall k\in\Z.
\end{align*}
This readily implies
\begin{align}
\label{e:shift_c}
 c_e(m_0,n_0+k) = c_e(m_0,n_0) + k \int_{S^2}\sigma,\qquad\forall k\in\Z.
\end{align}
The infimum in the definition of $c_e(m_0,n_0)$ is actually attained provided $e\in I'\cap\Ifin$.

\begin{lem}\label{cl:finite_essential_family_2}
If $e\in I'\cap\Ifin$, for all $(m_0,n_0)\in\N\times\Z$ there exist $(m_1,n_1)\in\N\times\Z$ such that $(m_0,n_0)\neq(m_1,n_1)$ and
$c_e(m_0,n_0) = c_e(m_0,n_0,m_1,n_1)$.
\end{lem}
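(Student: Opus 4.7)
The plan is to show that the infimum defining $c_e(m_0,n_0)$ is taken over a finite set of critical values of $A_e$, so that it is automatically attained. First I would confirm that $c_e(m_0,n_0)$ lies in a compact interval $[a_0,a_1] \subset \R$. For the lower bound, take $a_0 := \min A_e|_{Z^{n_0}(M_e^{m_0})}$; this is finite because $e \in \Idiscr$ forces $M_e$ to be a finite union of critical circles, hence $Z^{n_0}(M_e^{m_0})$ is compact. Every path $\Theta \in \PP_e(m_0,n_0,m_1,n_1)$ starts at a point of $Z^{n_0}(M_e^{m_0})$, so $\max A_e\circ\Theta \geq A_e(\Theta(0)) \geq a_0$, uniformly in $(m_1,n_1)$. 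For the upper bound, fix any pair $(m_1^*,n_1^*) \neq (m_0,n_0)$ (which exists trivially) and set $a_1 := c_e(m_0,n_0,m_1^*,n_1^*)$; this is finite since $\MM$ is path-connected, so $\PP_e(m_0,n_0,m_1^*,n_1^*) \neq \varnothing$, and one gets $c_e(m_0,n_0) \leq a_1$.

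Next, I would apply Lemma \ref{cl:finite_essential_family_1} to the compact interval $[a_0,a_1]$, producing a finite union of critical circles $\mathcal{E} \subset \crit(A_e)$ such that, for every $(m_1,n_1)$ with $c_e(m_0,n_0,m_1,n_1) \in [a_0,a_1]$, the set $\mathcal{E}$ contains an essential family for $\PP_e(m_0,n_0,m_1,n_1)$. In particular the corresponding minmax value is the common value of $A_e$ on that essential family, so it belongs to the finite set $A_e(\mathcal{E})$.

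Finally, in computing the infimum one can restrict to pairs $(m_1,n_1) \neq (m_0,n_0)$ with $c_e(m_0,n_0,m_1,n_1) \in [a_0,a_1]$, since pairs yielding values larger than $a_1$ cannot achieve the infimum. The set of values produced by such pairs is contained in the finite set $A_e(\mathcal{E})$ and is non-empty (it contains $a_1$), so it admits a minimum; this minimum must equal $c_e(m_0,n_0)$ and is realized by some admissible pair $(m_1,n_1)$. I do not foresee a serious obstacle: the entire argument is essentially a direct reduction to the finite essential family statement of Lemma \ref{cl:finite_essential_family_1}, together with the elementary compactness observations above.
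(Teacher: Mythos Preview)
Your argument is correct and follows essentially the same route as the paper: you bound $c_e(m_0,n_0)$ in a compact interval $[a_0,a_1]$ with $a_0=\min A_e|_{Z^{n_0}(M_e^{m_0})}$, invoke Lemma~\ref{cl:finite_essential_family_1} to force all relevant minmax values into the finite set $A_e(\E)$, and conclude that the infimum is attained. The only cosmetic difference is that the paper takes the specific upper bound $a_1=c_e(m_0,n_0,m_0+1,n_0)$ rather than an arbitrary $(m_1^*,n_1^*)$.
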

\begin{proof}
Let us fix $(m_0,n_0)\in\N\times\Z$, and set
\begin{align*}
a_0:= \min A_e|_{Z^{n_0}(M_e^{m_0})} \leq c_e(m_0,n_0,m_0+1,n_0) =:a_1.
\end{align*}
Notice that $c_e(m_0,n_0)\in[a_0,a_1]$. By Lemma~\ref{cl:finite_essential_family_1}, there exists a finite union of critical circles $\E\subset\crit(A_e)$ such that, whenever $c_e(m_0,n_0,m_1,n_1)\in[a_0,a_1]$, $\E$ contains an essential family for $\PP_e(m_0,n_0,m_1,n_1)$. We introduce the finite set of critical values
\begin{align*}
F:= \big\{A_e([w])\ \big|\ [w]\in\E\big\}.
\end{align*}
The value $c_e(m_0,n_0)$ is the infimum of those $c_e(m_0,n_0,m_1,n_1)$ belonging to the finite set $F$, and therefore it is a minimum.
\end{proof}

\subsection{The main multiplicity result}

Theorem~\ref{t:main} is an immediate consequence of the following more precise statement.

\begin{thm}\label{t:I'}
The set $I'\cap I_{\mathrm{finite}}$ is empty. Namely, for all energy values $e\in I'$, there are infinitely many periodic orbits with energy $e$.
\end{thm}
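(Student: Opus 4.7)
The plan is to argue by contradiction. Suppose $e\in I'\cap\Ifin$, and let $(\gamma_1,p_1),\dots,(\gamma_r,p_r)$ be the finitely many primitive periodic orbits with energy $e$; choose lifts $[v_1],\dots,[v_r]\in\MM$ to the universal cover. Every critical circle of $A_e$ then has the form $Z^n([v_i^j])$ for some $i\in\{1,\dots,r\}$, $j\in\N$, $n\in\Z$. Set $\omega:=\int_{S^2}\sigma$, which is non-zero by \eqref{e:sigma_non_exact}; without loss of generality we assume $\omega>0$.

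The first step is a pruning reduction. Combining the existence of essential families from Lemma~\ref{l:Palais_Smale} with iterated applications of Lemma~\ref{l:iterated_non_mountain_pass}, every minimax problem $\PP_e(m_0,n_0,m_1,n_1)$ admits an essential family contained in the low-iterate family
\[
\mathcal{F}_*:=\big\{Z^n([v_i^j])\ \big|\ 1\le i\le r,\ 1\le j\le m([v_i]),\ n\in\Z\big\},
\]
where $m([v_i])$ is the threshold from Lemma~\ref{l:iterated_non_mountain_pass}: one starts from any essential family and removes one by one every critical circle $Z^n([v_i^j])$ with $j>m([v_i])$. Consequently every attained minimax value belongs to the set of critical values
\[
\mathcal{V}_*:=\big\{A_e([v_i^j])+n\omega\ \big|\ 1\le i\le r,\ 1\le j\le m([v_i]),\ n\in\Z\big\},
\]
a finite union of arithmetic progressions of common step $\omega$ with only finitely many residues modulo $\omega$.

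The second step is a descending-chain argument. Starting from $(m_0,n_0)=(1,0)$, iteratively applying Lemma~\ref{cl:finite_essential_family_2} yields a sequence $(m_k,n_k)\in\N\times\Z$ with $(m_{k+1},n_{k+1})\neq(m_k,n_k)$ and $c_e(m_k,n_k)=c_e(m_k,n_k,m_{k+1},n_{k+1})$. The reversal symmetry $c_e(m_0,n_0,m_1,n_1)=c_e(m_1,n_1,m_0,n_0)$ (obtained by composing each path with $s\mapsto 1-s$) gives
\[
c_e(m_{k+1},n_{k+1})\le c_e(m_{k+1},n_{k+1},m_k,n_k)=c_e(m_k,n_k),
\]
so the values $c_e(m_k,n_k)$ form a non-increasing sequence in the discrete set $\mathcal{V}_*$, and therefore stabilize at some critical level $c\in\mathcal V_*$.

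The third step---which I expect to be the hardest---is to extract the actual contradiction. The plan is to use the starting-point estimate $c_e(m_k,n_k)\ge\min A_e|_{Z^{n_k}(M_e^{m_k})}$ together with the approximately linear growth of $A_e$ on iterated local minimizers (coming from Remark~\ref{r:local} and the standard action-iteration formula for the primitive $L+\theta$ near the waist) to show that, in the stabilization regime, the integer $n_k$ must drift to $-\infty$ proportionally to $m_k$. Combining this drift with the shift formula~\eqref{e:shift_c} and the finiteness of the residue set of $\mathcal{V}_*$ modulo $\omega$, a pigeonhole argument among the stabilized pairs produces indices $k<\ell$ with the same $(i,j)$ label, a controlled difference $n_\ell-n_k$, and a minimax problem $\PP_e(m_k,n_k,m_\ell,n_\ell)$ whose minimax value is forced to fall strictly between two consecutive elements of $\mathcal{V}_*$, hence outside $\mathcal{V}_*$. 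By Step 1 this value is not realized by any critical circle in $\mathcal{F}_*$, so the essential family guaranteed by Lemma~\ref{l:Palais_Smale} for this minimax problem must be empty, which is absurd.
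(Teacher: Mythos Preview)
Your Steps~1 and the symmetry/monotonicity observation in Step~2 are correct and close to the paper's Lemmas~\ref{cl:finite_essential_family_1}--\ref{cl:finite_essential_family_2}. However, the argument breaks down after that.

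First, the stabilization claim in Step~2 is not justified: $\mathcal V_*$ is discrete but unbounded below, so a non-increasing sequence in it need not stabilize. Your lower bound $c_e(m_k,n_k)\ge\min A_e|_{Z^{n_k}(M_e^{m_k})}$ depends on $(m_k,n_k)$ and does not prevent $c_e(m_k,n_k)\to-\infty$. Moreover, Lemma~\ref{cl:finite_essential_family_2} only yields $(m_{k+1},n_{k+1})\neq(m_k,n_k)$, so your chain may cycle with period two, in which case the $m_k$ do not grow and the ``drift'' picture collapses.

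Second, Step~3 is a sketch rather than a proof, and its stated mechanism is self-defeating: you propose to force some $c_e(m_k,n_k,m_\ell,n_\ell)$ strictly between consecutive elements of $\mathcal V_*$, but by your own Step~1 every such minimax value lies in $\mathcal V_*$. What one actually needs is a concrete \emph{path} whose maximum is strictly below the minimax level, and nothing in your plan produces one. The missing ingredient is Lemma~\ref{l:local_homology_finite_rank}: each isolated critical circle has a neighborhood whose intersection with the strict sublevel has only finitely many connected components. This is where the genuine finite pigeonhole set comes from; critical \emph{values} alone are not enough.

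For comparison, the paper bypasses the descending chain entirely. Using~\eqref{e:shift_c} one chooses, for every $m\in\N$, an integer $n_m$ with $c_e(m,n_m)\in[0,|\omega|)$, and Lemma~\ref{cl:finite_essential_family_2} realizes it as $c_e(m,n_m,m'_m,n'_m)$. All these minimax problems then share a single finite essential family $\E=\mathcal C_1\cup\dots\cup\mathcal C_s$ (Lemma~\ref{cl:finite_essential_family_1}). For each $m$ one records the sublevel component $\mathcal V_m$ (in the sense of Lemma~\ref{l:local_homology_finite_rank}) through which an almost-optimal path first enters $\E$; the collection of such components is finite, so two distinct $m_1,m_2$ share the same $\mathcal V_{m_1}=\mathcal V_{m_2}$. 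Concatenating the two initial segments through that common component produces a path in $\PP_e(m_1,n_{m_1},m_2,n_{m_2})$ with maximum strictly below $c_e(m_1,n_{m_1})\le c_e(m_1,n_{m_1},m_2,n_{m_2})$, the desired contradiction.
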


In the proof of Theorem~\ref{t:I'}, we will need the following abstract lemma established in \cite[Lemma~2.5]{Abbondandolo:2014rb} for the free-period action functional. Being a local statement, such a lemma holds for the functional $A_e$ as well (see Remark~\ref{r:local}).
\begin{lem}
\label{l:local_homology_finite_rank}
Every isolated critical circle $\mathcal C\subset\crit(A_e)\cap A_e^{-1}(c)$ has an arbitrarily small open neighborhood $\mathcal U$ such that the intersection $\mathcal U\cap\{A_e<c\}$ has only finitely many connected components.
\hfill\qed
\end{lem}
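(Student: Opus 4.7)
The plan is to reduce the statement to the analogous, already-established result for the ordinary free-period action functional of a genuine Tonelli Lagrangian, namely \cite[Lemma~2.5]{Abbondandolo:2014rb}, by exploiting the local identification provided by Remark~\ref{r:local}. Concretely, I would enclose the compact projection $\pi(\mathcal{C})\subset\M$ (a single critical circle of the free-period action form $\eta_e$) inside an open set of curves whose images lie in a proper open topological disc $U\subsetneq S^2$. Since $U$ is contractible, $\sigma|_U=d\theta$ for some primitive $\theta$, and the set $V\subset\M$ of those $(\gamma,p)$ with $\gamma(\T)\subset U$ is open. Let $\mathcal{U}_0\subset\MM$ be the connected component of $\pi^{-1}(V)$ containing $\mathcal{C}$. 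By Remark~\ref{r:local}, on $\mathcal{U}_0$ the functional $A_e$ agrees, up to an additive constant, with $S'_e\circ\pi|_{\mathcal{U}_0}$, where $S'_e$ is the free-period action functional associated to the Tonelli Lagrangian $L+\theta$. Moreover, because loops contained in a contractible disc of $S^2$ are themselves contractible, the covering map $\pi$ restricts to a diffeomorphism of $\mathcal{U}_0$ onto its image in $\M$.

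Under this diffeomorphism the assertion becomes: an isolated critical circle of the standard free-period action functional $S'_e$ of a Tonelli Lagrangian on $S^2$ admits arbitrarily small open neighborhoods whose intersection with $\{S'_e<c\}$ has only finitely many connected components. This is exactly \cite[Lemma~2.5]{Abbondandolo:2014rb}. Any small open neighborhood $\mathcal{U}\subset\mathcal{U}_0$ of $\mathcal{C}$ then satisfies the required finiteness, since path-components are preserved by the diffeomorphism $\pi|_{\mathcal{U}_0}$.

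For completeness, I would recall that the cited lemma follows from the classical Gromoll--Meyer finite-dimensional reduction around an isolated critical circle. The Hessian of $S'_e$ at a critical point is a Fredholm self-adjoint operator on the tangent Hilbert space (a compact perturbation of an invertible one, thanks to positive definiteness of the Tonelli Hessian in the fibre direction and the compact embedding $W^{1,2}(\T;S^2)\hookrightarrow C^0(\T;S^2)$), so its nullspace is finite-dimensional. One builds a Gromoll--Meyer pair around $\mathcal{C}$ that is homotopy equivalent to a finite CW pair, and the local sublevel set inherits finitely many connected components from this finite CW structure. The only magnetic-setting complication one might worry about, namely the non-completeness of $\M$ as the period tends to zero that was treated in Section~\ref{ss:valley}, does not arise here: on a sufficiently small neighborhood of the fixed critical circle $\mathcal{C}$ the period is bounded away from zero, so all the local analysis takes place in a genuinely complete region. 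The only real step is therefore the local identification above, which is a direct application of Remark~\ref{r:local}.
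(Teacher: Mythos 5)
Your proposal is correct and takes essentially the same route as the paper, which simply invokes \cite[Lemma~2.5]{Abbondandolo:2014rb} together with the observation that the statement is local and that, by Remark~\ref{r:local}, $A_e$ coincides near $\mathcal C$ (up to a constant and composition with the covering map) with the free-period action functional of a Tonelli Lagrangian. The only small imprecision is your justification of the even-covering/diffeomorphism claim for $\mathcal U_0$: what is needed is that the space of loops contained in the disc $U$ is simply connected (so that $\pi_1$ of that set maps trivially to $\pi_1(\M)\cong\Z$), not merely that each individual loop in $U$ is contractible in $S^2$; this is immediate and does not affect the argument.
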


\begin{proof}[Proof of Theorem~\ref{t:I'}]
We assume by contradiction that there exists $e\in I'\cap I_{\mathrm{finite}}$. We set
\begin{align*}
a_0:= 0 < \left|\int_{S^2}\sigma\right| =:a_1.
\end{align*}
Lemma~\ref{cl:finite_essential_family_1} provides a finite union of critical circles 
\[\E = \mathcal C_1\cup...\cup \mathcal C_s \subset \crit(A_e)\] such that, whenever $c_e(m_0,n_0,m_1,n_1)\in[a_0,a_1]$,  $\E$ contains an essential family for $\PP_e(m_0,n_0,m_1,n_1)$. By Equation~\eqref{e:shift_c} and Lemma~\ref{cl:finite_essential_family_2}, for each $m\in\N$ there exist $n_m\in\Z$ and $(m'_m,n_m')\in\N\times \Z$ such that
\begin{align*}
a_0 \leq c_e(m,n_m) = c_e(m,n_m,m_m',n_m') < a_1.
\end{align*}
In particular, $\E$ contains an essential family for $\PP_e(m,n_m,m_m',n_m')$. 
For each $i=1,...,s$, we consider an open neighborhood $\mathcal U_{i}$ of the critical circle $\mathcal C_i$ given by Lemma~\ref{l:local_homology_finite_rank}. We define 
\[
\mathcal{F}:= \bigcup_{i=1,...,s}\Big\{ \mathcal V\ \Big|\ \mathcal V\ \mbox{is a connected component of } \mathcal U_i\cap\{A_e<A_e(\mathcal C_i)\} \Big\}.
\]
Notice that $\mathcal{F}$ has finite cardinality according to Lemma~\ref{l:local_homology_finite_rank}. For each $m\in\N$, there exists $\mathcal V_m\in \mathcal{F}$ with the following property:
there exists a path $\Theta_m\in\PP_e(m,n_m,m'_m,n'_m)$ and $s_m\in[0,1]$ such that the restriction $\Theta_m|_{[0,s_m]}$ is contained in the sublevel set $\{A_e<c_e(m,n_m,m'_m,n'_m)\}$, and $\Theta_m(s_m)\in \mathcal V_m$. Since $\mathcal{F}$ is finite, by the pigeonhole principle there exist distinct $m_1, m_2\in\N$ such that $\mathcal V_{m_1}= \mathcal V_{m_2}$. In particular, $c_e(m_1,n_{m_1},m'_{m_1},n'_{m_1})=c_e(m_2,n_{m_2},m'_{m_2},n'_{m_2})$.

Consider the path $\Theta:[0,1]\to \MM$ obtained by concatenation of three paths: the restricted path $\Theta_{m_1}|_{[0,s_{m_1}]}$, some path connecting $\Theta_{m_1}(s_{m_1})$ with $\Theta_{m_2}(s_{m_2})$ within $\mathcal V_{m_1}$, and the restricted path $\Theta_{m_2}|_{[0,s_{m_2}]}$ traversed in the opposite direction. By construction, $\Theta\in\PP_e(m_1,n_{m_1},m_2,n_{m_2})$. However,
\begin{align*}
\max A_e\circ\Theta < c_e(m_1,n_{m_1},m'_{m_1},n'_{m_1}) = c_e(m_1,n_{m_1}) \leq c_e(m_1,n_{m_1},m_2,n_{m_2}),
\end{align*}
which contradicts the definition of $c_e(m_1,n_{m_1},m_2,n_{m_2})$. 
\end{proof}

\bibliography{_biblio}

\providecommand{\bysame}{\leavevmode\hbox to3em{\hrulefill}\thinspace}
\providecommand{\MR}{\relax\ifhmode\unskip\space\fi MR }
% \MRhref is called by the amsart/book/proc definition of \MR.
\providecommand{\MRhref}[2]{%
  \href{http://www.ams.org/mathscinet-getitem?mr=#1}{#2}
}
\providecommand{\href}[2]{#2}
\begin{thebibliography}{AMMP14}

\bibitem[AB15a]{Asselle:2015ij}
L.~Asselle and G.~Benedetti, \emph{Infinitely many periodic orbits in non-exact
  oscillating magnetic fields on surfaces with genus at least two for almost
  every low energy level}, Calc. Var. Partial Differ. Equ. \textbf{54} (2015),
  no.~2, 1525--1545.

\bibitem[AB15b]{Asselle:2015sp}
\bysame, \emph{Periodic orbits in oscillating magnetic fields on ${T}^2$},
  arXiv:1510.00152, 2015.

\bibitem[AB16]{Asselle:2014hc}
\bysame, \emph{The {L}usternik-{F}et theorem for autonomous {T}onelli
  {H}amiltonian systems on twisted cotangent bundles}, J. Topol. Anal.
  \textbf{8} (2016), no.~3, 545--570.

\bibitem[Abb13]{Abbondandolo:2013is}
A.~Abbondandolo, \emph{Lectures on the free period {L}agrangian action
  functional}, J. Fixed Point Theory Appl. \textbf{13} (2013), no.~2, 397--430.

\bibitem[AM16]{Asselle:2016qv}
L.~Asselle and M.~Mazzucchelli, \emph{On {T}onelli periodic orbits with low
  energy on surfaces}, arXiv:1601.06692, 2016.

\bibitem[AMMP14]{Abbondandolo:2014rb}
A.~Abbondandolo, L.~Macarini, M.~Mazzucchelli, and G.~P. Paternain,
  \emph{Infinitely many periodic orbits of exact magnetic flows on surfaces for
  almost every subcritical energy level}, arXiv:1404.7641, to appear in J. Eur.
  Math. Soc. (JEMS), 2014.

\bibitem[AMP15]{Abbondandolo:2015lt}
A.~Abbondandolo, L.~Macarini, and G.~P. Paternain, \emph{On the existence of
  three closed magnetic geodesics for subcritical energies}, Comment. Math.
  Helv. \textbf{90} (2015), no.~1, 155--193.

\bibitem[Arn61]{Arnold:1961lq}
V.~I. Arnold, \emph{Some remarks on flows of line elements and frames}, Dokl.
  Akad. Nauk SSSR \textbf{138} (1961), 255--257.

\bibitem[Ban80]{Bangert:1980ho}
V.~Bangert, \emph{Closed geodesics on complete surfaces}, Math. Ann.
  \textbf{251} (1980), no.~1, 83--96.

\bibitem[Ban93]{Bangert:1993wo}
\bysame, \emph{On the existence of closed geodesics on two-spheres}, Internat.
  J. Math. \textbf{4} (1993), no.~1, 1--10.

\bibitem[Ben16]{Benedetti:2016}
G.~Benedetti, \emph{Magnetic {K}atok example on the two-sphere},
  arXiv:1507.05341, to appear in Bull. Lond. Math. Soc., 2016.

\bibitem[BT98]{Bahri:1998eu}
A.~Bahri and I.~A. Taimanov, \emph{Periodic orbits in magnetic fields and
  {R}icci curvature of {L}agrangian systems}, Trans. Amer. Math. Soc.
  \textbf{350} (1998), no.~7, 2697--2717.

\bibitem[CFP10]{Cieliebak:2010zt}
K.~Cieliebak, U.~Frauenfelder, and G.~P. Paternain, \emph{Symplectic topology
  of {M}a\~n{\'e}'s critical values}, Geom. Topol. \textbf{14} (2010), no.~3,
  1765--1870.

\bibitem[CI99]{Contreras:1999fm}
G.~Contreras and R.~Iturriaga, \emph{Global minimizers of autonomous
  {L}agrangians}, 22$^{\rm o}$ Col\'oquio Brasileiro de Matem\'atica, IMPA, Rio
  de Janeiro, 1999.

\bibitem[CMP04]{Contreras:2004lv}
G.~Contreras, L.~Macarini, and G.~P. Paternain, \emph{Periodic orbits for exact
  magnetic flows on surfaces}, Int. Math. Res. Not. (2004), no.~8, 361--387.

\bibitem[Con06]{Contreras:2006yo}
G.~Contreras, \emph{{The Palais-Smale condition on contact type energy levels
  for convex Lagrangian systems}}, Calc. Var. Partial Differ. Equ. \textbf{27}
  (2006), no.~3, 321--395.

\bibitem[Fat08]{Fathi:2008xl}
A.~Fathi, \emph{Weak {KAM} theorem in {L}agrangian dynamics}, Cambridge Univ.
  Press, forthcoming, preliminary version number 10, 2008.

\bibitem[Fra92]{Franks:1992jt}
J.~Franks, \emph{Geodesics on {$S^2$} and periodic points of annulus
  homeomorphisms}, Invent. Math. \textbf{108} (1992), no.~2, 403--418.

\bibitem[Hin93]{Hingston:1993ou}
N.~Hingston, \emph{On the growth of the number of closed geodesics on the
  two-sphere}, Internat. Math. Res. Notices (1993), no.~9, 253--262.

\bibitem[Mat91]{Mather:1991uq}
J.~N. Mather, \emph{Action minimizing invariant measures for positive definite
  {L}agrangian systems}, Math. Z. \textbf{207} (1991), no.~2, 169--207.

\bibitem[Nov81]{Novikov:1981ef}
S.~P. Novikov, \emph{Variational methods and periodic solutions of equations of
  {K}irchhoff type. {II}}, Funktsional. Anal. i Prilozhen. \textbf{15} (1981),
  no.~4, 37--52, 96.

\bibitem[Nov82]{Novikov:1982xy}
\bysame, \emph{The {H}amiltonian formalism and a multivalued analogue of
  {M}orse theory}, Uspekhi Mat. Nauk \textbf{37} (1982), no.~5(227), 3--49,
  248.

\bibitem[Tai83]{Taimanov:1983uo}
I.~A. Taimanov, \emph{The principle of throwing out cycles in {M}orse-{N}ovikov
  theory}, Dokl. Akad. Nauk SSSR \textbf{268} (1983), no.~1, 46--50.

\bibitem[Tai91]{Taimanov:1991el}
\bysame, \emph{Non-self-itersecting closed extremals of multivalued or not
  everywhere positive functionals}, Izv. Akad. Nauk SSSR Ser. Mat. \textbf{55}
  (1991), no.~2, 367--383.

\bibitem[Tai92a]{Taimanov:1992fs}
\bysame, \emph{Closed extremals on two-dimensional manifolds}, Uspekhi Mat.
  Nauk \textbf{47} (1992), no.~2(284), 143--185, 223.

\bibitem[Tai92b]{Taimanov:1992sm}
\bysame, \emph{Closed non-self-intersecting extremals of multivalued
  functionals}, Sibirsk. Mat. Zh. \textbf{33} (1992), no.~4, 155--162, 223.

\bibitem[Zil83]{Ziller:1983lq}
W.~Ziller, \emph{Geometry of the {K}atok examples}, Ergodic Theory Dynam.
  Systems \textbf{3} (1983), no.~1, 135--157.

\end{thebibliography}
\bibliographystyle{amsalpha}

\end{document}